\renewcommand{\Stab}{\operatorname{Sp}}
\renewcommand{\trun}{\uptau}
\newcommand{\BC}{\mathrm{BC}}
\newcommand{\BCp}{\BC_{p}}
\newcommand{\Cbf}{\mathbfit{C}}
\newcommand{\Cp}{\operatorname{C}_{p}}
\newcommand{\RP}{\mathbf{RP}}
\newcommand{\Spcfin}{\Spc^{\fin}}
\newcommand{\Spcfinpt}{\Spcfin_{\ast}}
\newcommand{\cpt}{\mathrm{c}}
\newcommand{\cptproj}{\cpt\mathrm{pr}}
\newcommand{\fp}{\ensuremath{\textup{fp}}}
\newcommand{\Klim}{\ensuremath{\Kcal\textup{-lim}}}
\newcommand{\Catfp}{\Cat_{\infty}^{\kern0.05em\fp}}
\newcommand{\CatKlim}{\Cat_{\infty}^{\kern0.05em\Klim}}
\newcommand{\Catlex}{\Cat_{\infty}^{\kern0.05em\lex}}
\newcommand{\FunKlim}{\Fun^{\Klim}}
\newcommand{\Catinfty}{\Cat_{\infty}}
\newcommand{\Ecpt}{E^{\cpt}}
\newcommand{\Ecptproj}{E^{\cptproj}}
\newcommand{\Ecptop}{E^{\cpt,\op}}
\newcommand{\Ecptprojop}{E^{\cptproj,\op}}
\newcommand{\st}{\mathrm{st}}
\newcommand{\PrLst}{\PrL_{\st}}
\renewcommand{\D}{\operatorname{D}}
\newcommand{\Lfupperstar}{\mathrm{L}\fupperstar}
\newcommand{\Lpupperstar}{\mathrm{L}\pupperstar}
\newcommand{\Lfbarupperstar}{\mathrm{L}\fbar\upperstar}
\newcommand{\Rflowerstar}{\mathrm{R}\flowerstar}
\newcommand{\Rglowerstar}{\mathrm{R}\glowerstar}
\newcommand{\Rfbarlowerstar}{\mathrm{R}\fbar\lowerstar}
\newcommand{\Rgbarlowerstar}{\mathrm{R}\gbar\lowerstar}
\newcommand{\guppersharp}{g^{\sharp}}
\newcommand{\gbaruppersharp}{\gbar^{\sharp}}
\title{\Large From nonabelian basechange to basechange with coefficients}
\author{\normalsize Peter J. Haine}
\date{\normalsize \today}
\begin{document}

\maketitle

\begin{abstract} 
	The goal of this paper is to explain when basechange theorems for sheaves of spaces imply basechange for sheaves with coefficients in other presentable \categories.
	We accomplish this by analyzing when the tensor product of presentable \categories preserves left adjointable squares.
	As a sample result, we show that the Proper Basechange Theorem in topology holds with coefficients in any presentable \category which is compactly generated or stable.
	We also prove results about the interaction between tensor products of presentable \categories and various categorical constructions that are of independent interest.
\end{abstract}

\setcounter{tocdepth}{2}

\tableofcontents


\setcounter{section}{-1}

\section{Introduction}

Let
\begin{equation}\label{sq:LCHpull}
	\begin{tikzcd}
		W \arrow[r, "\fbar"] \arrow[d, "\gbar"'] \arrow[dr, phantom, very near start, "\lrcorner", xshift=-0.25em, yshift=0.25em] & Y \arrow[d, "g"] \\ 
		X \arrow[r, "f"'] & Z 
	\end{tikzcd}
\end{equation}
be a pullback square of locally compact Hausdorff topological spaces, and assume that the map $ g $ is proper.
The classical Proper Basechange Theorem in topology \cites[\stackstag{09V6}]{stacksproject}[Exposé Vbis, Théorème 4.1.1]{MR50:7131} says that for any ring $ R $, the induced square of bounded-above%
\footnote{We use \textit{homological} indexing.
What we write as $ \D(T;R)_{<\infty} $ is often written as $ \D^+(T;R) $.}
derived \categories 
\begin{equation}\label{sq:DLCHpull}
	\begin{tikzcd}
		\D(W;R)_{<\infty} \arrow[r, "\Rfbarlowerstar"] \arrow[d, "\Rgbarlowerstar"'] & \D(Y;R)_{<\infty} \arrow[d, "\Rglowerstar"] \\ 
		\D(X;R)_{<\infty} \arrow[r, "\Rflowerstar"'] & \D(Z;R)_{<\infty} 
	\end{tikzcd}
\end{equation}
is \textit{left adjointable}.
That is to say, for each object $ F \in \D(Y;R)_{<\infty} $, the natural \textit{exchange morphism}
\begin{equation*}
	\fromto{\Lfupperstar \Rglowerstar(F)}{\Rgbarlowerstar \Lfbarupperstar(F)}
\end{equation*}
is an equivalence.
As Lurie remarks \HTT{Remark}{7.3.1.19}, the classical Proper Basechange Theorem follows from the \textit{Nonabelian Proper Basechange Theorem} \HTT{Corollary}{7.3.1.18}: the induced square of \categories of sheaves of \textit{spaces} 
\begin{equation*}\label{sq:ShLCHpull}
	\begin{tikzcd}
		\Sh(W;\Spc) \arrow[r, "\fbarlowerstar"] \arrow[d, "\gbarlowerstar"'] & \Sh(Y;\Spc) \arrow[d, "\glowerstar"] \\ 
		\Sh(X;\Spc) \arrow[r, "\flowerstar"'] & \Sh(Z;\Spc) 
	\end{tikzcd}
\end{equation*}
is left adjointable.

The goal of this paper is to expand on Lurie's remark and explain when basechange results for sheaves of spaces imply basechange results for sheaves with coefficients in other presentable \categories.
Our inquiry is informed by the following observation: for a topological space $ T $ and ring $ R $, the \textit{unbounded} derived \category $ \D(T;R) $ naturally embeds as a full subcategory of the Deligne--Lurie tensor product of presentable \categories
\begin{equation*}
	\Sh(T;\D(R)) \colonequals \Sh(T;\Spc) \tensor \D(R) 
\end{equation*}
\cite[\SAGthm{Remark}{1.3.1.6}, \SAGthm{Corollary}{1.3.1.8}, \& \SAGthm{Corollary}{2.1.2.3}]{SAG}.
That is, $ \D(T;R) $ embeds into the \category of sheaves on $ T $ valued in the derived \category of $ R $.
Moreover:
\begin{enumerate}
	\item The essential image of this embedding $ \incto{\D(T;R)}{\Sh(T;\D(R))} $ is the full subcategory spanned by the $ \D(R) $-valued \textit{hyper}sheaves on $ T $.
	In many situations the two \categories coincide, e.g., if $ T $ admits a CW structure \cite{MO:168526} or is sufficiently finite-dimensional \cites[\HTTthm{Corollary}{7.2.1.12}, \HTTthm{Theorem}{7.2.3.6} \& \HTTthm{Remark}{7.2.4.18}]{HTT}[Theorem 3.12]{ClausenMathew:hyperdescent}.

	\item There is a natural \tstructure on the stable \category $ \Sh(T;\D(R)) $. 
	Moreover, the embedding $ \incto{\D(T;R)}{\Sh(T;\D(R))} $ is \texact and restricts to an equivalence
	\begin{equation*}
		\equivto{\D(T;R)_{<\infty}}{\Sh(T;\D(R))_{<\infty}}
	\end{equation*}
	on \tboundedabove objects \SAG{Corollary}{2.1.2.4}.
\end{enumerate}
These points raise a natural question:

\begin{question}\label{quest:motivating}
	Does the Proper Basechange Theorem hold with the bounded-above dervied \categories $ \D(-;R)_{<\infty} $ replaced by the larger \categories $ \Sh(-;\D(R)) $?
	If so, can this extension of the be deduced from Lurie's Nonabelian Proper Basechange Theorem by a `formal' argument about the tensor product of presentable \categories preserving adjointability?
\end{question}

We explain why the answer to both questions is affirmative.
However, there are some important subtleties.
The general setup we consider is a square of presentable \categories and right adjoints
\begin{equation}\label{eq:introgensq}
	\begin{tikzcd}
		A \arrow[r, "\fbarlowerstar"] \arrow[d, "\gbarlowerstar"'] & C \arrow[d, "\glowerstar"] \arrow[dl, phantom, "\scriptstyle \sigma" below right, "\Longleftarrow" sloped] \\ 
		B \arrow[r, "\flowerstar"'] & D 
	\end{tikzcd}
\end{equation}
equipped with a (not necessarily invertible) natural transformation $ \sigma \colon \fromto{\glowerstar \fbarlowerstar}{\gbarlowerstar\flowerstar} $.
In this general setting, there is a natural \textit{exchange morphism}
\begin{equation*}
	\Ex_{\sigma} \colon \fromto{\fupperstar \glowerstar}{\gbarlowerstar \fbarupperstar}
\end{equation*}
associated to the diagram \eqref{eq:introgensq}; see \cref{subsec:adjointability}.
Let $ E $ be another presentable \category.
The main subtlety is that even if the exchange morphism $ \fromto{\fupperstar \glowerstar}{\gbarlowerstar \fbarupperstar} $ is an equivalence, the exchange morphism associated to the tensored-up diagram
\begin{equation}\tag*{$ (\textup{\ref{eq:introgensq}}) \tensor E $}\label{eq:introgensqE}
	\begin{tikzcd}[sep=3em]
		A \tensor E \arrow[r, "\fbarlowerstar \tensor E"] \arrow[d, "\gbarlowerstar \tensor E"'] & C \tensor E \arrow[d, "\glowerstar \tensor E"] \arrow[dl, phantom, "\scriptstyle \sigma \tensor E" below right, "\Longleftarrow" sloped] \\ 
		B \tensor E \arrow[r, "\flowerstar \tensor E"'] & D \tensor E
	\end{tikzcd}
\end{equation}
need not be an equivalence (see \Cref{ex:trunSpcsquare}).

However, there are many situations in which the left adjointability of \eqref{eq:introgensq} implies the left adjointability of \ref*{eq:introgensqE}. 
The following is probably the most useful result in this direction; when $ E $ is compactly generated, this applies to squares of \topoi and geometric morphisms.

\begin{theorem}[(\Cref{cor:CGadjointability,cor:stabtensorfilteredcolim})]\label{thm:main}
	Consider an oriented square \eqref{eq:introgensq} of presentable \categories and right adjoints.
	Assume that the left adjoints $ \fupperstar $ and $ \fbarupperstar $ are left exact and that square \eqref{eq:introgensq} is left adjointable.
	Let $ E $ be a presentable \category, and assume that one of the following conditions is satisfied:
	\begin{enumerate}[label=\stlabel{thm:main}, ref=\arabic*]
		\item The \category $ E $ is compactly generated.

		\item The \category $ E $ is stable and the right adjoints $ \glowerstar $ and $ \gbarlowerstar $ preserve filtered colimits.
	\end{enumerate}
	Then the induced square \ref*{eq:introgensqE} is left adjointable.
\end{theorem}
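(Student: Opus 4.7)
The plan is to treat the two cases via a common strategy: reduce left adjointability of the tensored square to left adjointability of the original square by finding a concrete description of the right adjoint $\widetilde{\glowerstar}$ of $\gupperstar \otimes \id_E \colon D \otimes E \to C \otimes E$ in $\Pr^L$, and analogously for $\widetilde{\gbarlowerstar}$. The difficulty is structural: the left adjoints $\fupperstar, \fbarupperstar, \gupperstar, \gbarupperstar$ all lie in $\Pr^L$ and tensor canonically with $E$, but the right adjoints $\glowerstar, \gbarlowerstar$ are merely accessible and limit-preserving, so there is no a priori tensor-up; the right adjoint $\widetilde{\glowerstar}$ exists only abstractly via the adjoint functor theorem.

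With this setup, I would first reformulate the tensored exchange morphism as the mate of the naturally commuting square of left adjoints $\fupperstar \otimes \id_E, \fbarupperstar \otimes \id_E, \gupperstar \otimes \id_E, \gbarupperstar \otimes \id_E$ in $\Pr^L$, and identify the map in question as built from the comparison $\fromto{\glowerstar \otimes \id_E}{\widetilde{\glowerstar}}$ (and analogously for $\bar g$). The goal in each case is then to upgrade this comparison so that the tensored exchange decomposes, on a generating class of objects of $A \otimes E$, into the original exchange morphism $\fromto{\fupperstar \glowerstar}{\gbarlowerstar \fbarupperstar}$, which is invertible by hypothesis.

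For case~(1), where $E \simeq \Ind(E_0)$ for $E_0$ essentially small, I would exploit compatibility of $\otimes$ in $\Pr^L$ with $\Ind$-completion to model $A \otimes E$ as a full subcategory of $A$-valued functors on $E_0^{\op}$ (cut out by suitable finite-limit or exactness conditions). In this model, $\widetilde{\glowerstar}$ is induced by post-composition with $\glowerstar$, and the tensored exchange morphism becomes the pointwise exchange morphism of the original square. The left-exactness hypothesis on $\fupperstar$ and $\fbarupperstar$ enters to ensure that the induced tensored-up left adjoints respect the finite-limit/exactness conditions defining the model. For case~(2), the model is different: stability makes $E$ a $\Sp$-module, and the filtered-colimit preservation of $\glowerstar, \gbarlowerstar$ makes these right adjoints accessible enough that $\widetilde{\glowerstar}$ is determined by its values on pure tensors $c \otimes e$ via the formula $c \otimes e \longmapsto \glowerstar(c) \otimes e$, extended by filtered colimits. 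Again the tensored exchange reduces to the original one.

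The principal obstacle is obtaining the concrete description of $\widetilde{\glowerstar}$ in each case: the tensor product in $\Pr^L$ is defined by a universal property rather than a formula, so the existence of such a description is not formal but requires exactly the hypotheses imposed. In case~(1), the subtlety is verifying that the functor-category model of $A \otimes E$ correctly captures right adjoints (which is where left-exactness of $\fupperstar, \fbarupperstar$ matters); in case~(2), the subtlety is showing that filtered-colimit preservation of the right adjoints actually produces a global colimit-compatible extension agreeing with $\widetilde{\glowerstar}$. I expect most of the technical work of the corollaries \cref{cor:CGadjointability,cor:stabtensorfilteredcolim} to sit in upstream lemmas establishing precisely these descriptions, after which the deduction of left adjointability of the tensored square is essentially formal.
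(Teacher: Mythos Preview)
Your treatment of case~(1) is essentially the paper's argument: model $S \tensor E$ as $\Funlex(\Ecptop,S)$, observe that under this identification both the right adjoints $\glowerstar,\gbarlowerstar$ \emph{and} the left-exact left adjoints $\fupperstar,\fbarupperstar$ act by post-composition, so the tensored exchange morphism is the original one objectwise. The paper packages this as the statement that $(-)\tensor E$ extends to a functor of $(\infty,2)$-categories $\Catlex\to\Catlex$, and such functors preserve left adjointability; your formulation is equivalent.

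For case~(2), your plan diverges from the paper and has a gap. You propose to describe $\glowerstar\tensor E$ on pure tensors and extend by filtered colimits, then argue the exchange reduces to the original one. The difficulty is not $\glowerstar\tensor E$ but $\fupperstar\tensor E$: since $E$ is not assumed compactly generated, there is no $\Funlex$-model in which $\fupperstar\tensor E$ is post-composition, so you have no handle on the composite $(\fupperstar\tensor E)\circ(\glowerstar\tensor E)$ even on pure tensors. The paper avoids this entirely by a two-step trick. First tensor with $\Spt$ (compactly generated, so case~(1) applies) to get that $\textup{\eqref{sq:generalsquare}}\tensor\Spt$ is left adjointable. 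Second, observe that the filtered-colimit hypothesis forces $\glowerstar\tensor\Spt$ and $\gbarlowerstar\tensor\Spt$ to be \emph{left} adjoints (they are exact between stable \categories and preserve filtered colimits, hence all colimits). Now horizontal left adjointability of $\textup{\eqref{sq:generalsquare}}\tensor\Spt$ is equivalent to vertical right adjointability, and the latter \emph{is} a condition internal to $\PrR$, hence preserved by $(-)\tensor E$ for any $E$. Since $E$ stable gives $\textup{\eqref{sq:generalsquare}}\tensor E \simeq (\textup{\eqref{sq:generalsquare}}\tensor\Spt)\tensor E$, you are done. The key idea you are missing is this passage through vertical right adjointability; it is what makes the argument formal rather than a direct computation of the exchange morphism.
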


Throughout this paper, we also prove other adjointability results as well as results about the interaction between tensor products of presentable \categories and various categorical constructions that are of independent interest.

\begin{example}[(\Cref{subex:propertopology})]
	Let us return to the setting of a pullback square of locally compact Hausdorff spaces \eqref{sq:LCHpull} where the morphism $ g \colon \fromto{Y}{Z} $ is proper.
	Lurie's Nonabelian Proper Basechange Theorem and \Cref{thm:main} show that if $ E $ is a presentable \category which is compactly generated or stable, then the induced square of \categories of $ E $-valued sheaves
	\begin{equation}\label{sq:ShLCHpullE}
		\begin{tikzcd}
			\Sh(W;E) \arrow[r, "\fbarlowerstar"] \arrow[d, "\gbarlowerstar"'] & \Sh(Y;E) \arrow[d, "\glowerstar"] \\ 
			\Sh(X;E) \arrow[r, "\flowerstar"'] & \Sh(Z;E) 
		\end{tikzcd}
	\end{equation}
	is left adjointable.
	This generalizes the classical Proper Basechange Theorem in two important ways:
	\begin{enumerate}
		\item Let $ R $ be an ordinary ring, and let $ E = \D(R) $ be the unbounded derived \category of $ R $. 
		The left adjointability of the square \eqref{sq:ShLCHpullE} generalizes the classical Proper Basechange Theorem to objects of $ \Sh(T;\D(R)) $ that are not bounded-above, and answers \Cref{quest:motivating} in the affirmative.

		\item A version of the Proper Basechange Theorem holds for sheaves of modules over any $ \Eup_1 $-ring \textit{spectrum} $ R $ or \textit{animated ring} (in the terminology of \cites[Appendix A]{arXiv:2201.06120}[\S5.1.4]{arXiv:1912.10932}).
	\end{enumerate}
\end{example}

\begin{remark}[(unbounded derived \categories)]
	There are two natural squares of right adjoints enlarging the square \eqref{sq:DLCHpull} appearing in the classical Proper Basechange Theorem: the square of classical unbounded derived \categories
	\begin{equation}\label{sq:classicalD}
		\begin{tikzcd}
			\D(W;R) \arrow[r, "\Rfbarlowerstar"] \arrow[d, "\Rgbarlowerstar"'] & \D(Y;R) \arrow[d, "\Rglowerstar"] \\ 
			\D(X;R) \arrow[r, "\Rflowerstar"'] & \D(Z;R) 
		\end{tikzcd}
	\end{equation}
	and the square of \categories of $ \D(R) $-valued sheaves
	\begin{equation}\label{sq:ShLCHpullDR}
		\begin{tikzcd}
			\Sh(W;\D(R)) \arrow[r, "\fbarlowerstar"] \arrow[d, "\gbarlowerstar"'] & \Sh(Y;\D(R)) \arrow[d, "\glowerstar"] \\ 
			\Sh(X;\D(R)) \arrow[r, "\flowerstar"'] & \Sh(Z;\D(R)) \period
		\end{tikzcd}
	\end{equation}
	We have seen that the square \eqref{sq:ShLCHpullDR} is left adjointable.
	Moreover, for a topological space $ T $, the unbounded derived \category $ \D(T;R) $ is the full subcategory of $ \Sh(T;\D(R)) $ spanned by the hypersheaves.
	So the square \eqref{sq:ShLCHpullDR} is really an enlargement of the square \eqref{sq:classicalD}.
	However, the left adjointability of the square \eqref{sq:ShLCHpullDR} does \textit{not} imply the left adjointability of the square \eqref{sq:classicalD}, and the square \eqref{sq:classicalD} is \textit{not} generally left adjointable.
	See \cite[\HTTthm{Counterexample}{6.5.4.2} \& \HTTthm{Remark}{6.5.4.3}]{HTT}.
	The key point is the following: if $ F \in \D(Y;R) $ is not \tboundedabove, then the exchange transformation
	\begin{equation}\label{eq:LRexchange}
		\fromto{\Lfupperstar \Rglowerstar(F)}{\Rgbarlowerstar \Lfbarupperstar(F)}
	\end{equation}
	associated to the square \eqref{sq:classicalD} does \textit{not} agree with the exchange transformation
	\begin{equation*}
		\fromto{\fupperstar \glowerstar(F)}{\gbarlowerstar \fbarupperstar(F)}
	\end{equation*}
	associated to \eqref{sq:ShLCHpullDR}.
	The reason is that given a map of topological spaces $ p \colon \fromto{T}{S} $, the pullback functor
	\begin{equation*}
		\pupperstar \colon \fromto{\Sh(S;\D(R))}{\Sh(T;\D(R))}
	\end{equation*}
	does not generally carry $ \D(S;R) $ to $ \D(T;R) $.
	The inclusion $ \incto{\D(T;R)}{\Sh(T;\D(R))} $ admits a \texact left adjoint $ (-)^{\hyp} \colon \fromto{\Sh(T;\D(R))}{\D(T;R)} $ called \textit{hypercompletion}, and the left derived functor $ \Lpupperstar \colon \fromto{\D(S;R)}{\D(T;R)} $ is the composite
	\begin{equation*}
		\begin{tikzcd}
			\D(S;R) \arrow[r, hooked] & \Sh(S;\D(R)) \arrow[r, "\pupperstar"] & \Sh(T;\D(R)) \arrow[r, "(-)^{\hyp}"] & \D(T;R) \period
		\end{tikzcd}
	\end{equation*}
	This extra hypercompletion procedure is nontrivial and is what prevents the exchange transformation \eqref{eq:LRexchange} from being an equivalence in general.

	The \texact inclusion $ \incto{\D(T;R)}{\Sh(T;\D(R))} $ restricts to an equivalence on hearts; hence the \category $ \Sh(T;\D(R)) $ is not generally the derived \category of an abelian category.
	Thus, if one wants a version of the Proper Basechange Theorem for unbounded complexes, one is forced to leave the world of classical derived categories and needs to work with \categories.
	These comments are the reason Spaltenstein \cite{MR932640} was unable to prove a version of the Proper Basechange Theorem for arbitrary unbounded complexes.
	They also highlight a major advantage of working with the \categories $ \Sh(-;\D(R)) $ over the \categories $ \D(-;R) $.
\end{remark}

\begin{remark}
	We have been aware of \Cref{thm:main} for some time, and certainly results of this form are known to experts.
	However, we were unable to locate a source explaining the relationship between left adjointability and tensoring with a presentable \category.
	We have written this paper because we need to use results of this form in forthcoming work; we hope that others will also find the results presented here useful.
\end{remark}


\subsection*{Linear overview}\label{subsec:linoverview}

In \cref{sec:prelim,sec:CG}, we recall the background we need about adjointability and tensor products of presentable \categories.
We also collect key examples of when tensoring with a presentable \category does or does not preserve left adjointability.
Setting up the notation and explaining explicit descriptions of the tensor product we need takes a bit of time, but once everything is in place \Cref{thm:main} is elementary.
The key insight is that tensoring with a compactly generated \category has additional unexpected functoriality: it is functorial not only in adjunctions, but also arbitrary left exact functors.
We also use these explicit descriptions to show that the tensor product preserves many properties of functors (\cref{subsec:conservativity}), tensoring with a compactly generated \category preserves limits of diagrams of left exact left adjoints (\cref{subsec:commuting_tensors_past_limits}), and that, in most situations that arise in nature, the tensor product preserves recollements (\cref{subsec:recollements}).
In \cref{sec:results}, we prove \Cref{thm:main} and derive some consequences.
\Cref{sec:adjointability_stabilization} deals with situations where we only know that the exchange morphism is an equivalence when restricted to a (not necessarily presentable) subcategory.


\begin{acknowledgments}
	We thank Clark Barwick, Marc Hoyois, and Lucy Yang for insightful discussions.
	We thank Mauro Porta and Jean-Baptiste Teyssier for helpful correspondence and for encouraging us to include \cref{subsec:recollements}.
	Special thanks are due to Jacob Lurie for explaining \Cref{ex:Spcsquaregeneral,ex:trunSpcsquare}.

	We gratefully acknowledge support from the MIT Dean of Science Fellowship, the NSF Graduate Research Fellowship under Grant \#112237, UC President's Postdoctoral Fellowship, and NSF Mathematical Sciences Postdoctoral Research Fellowship under Grant \#DMS-2102957. 
\end{acknowledgments}


\subsection*{Terminology and notations}\label{subsec:terminology}

We use the terms \textit{\category} and \textit{$ (\infty,1) $-category} interchangeably.
We write $ \Spc $ for the \category of spaces.
Given \categories $ C $ and $ D $ with limits, we write \smash{$ \Funlim(C,D) \subset \Fun(C,D) $} for the full subcategory spanned by the limit-preserving functors $ \fromto{C}{D} $.

In this paper, we use a small amount of the theory of $ (\infty,2) $-categories; all of the $ (\infty,2) $-categories we use are subcategories of the $ (\infty,2) $-category $ \Cat_{\infty} $ of locally small but potentialy large $ (\infty,1) $-categories, functors, and natural transformations.
Moreover, all functors of $ (\infty,2) $-categories are subfunctors of the functor $ \goesto{(C,D)}{\Fun(C,D)} $.
We write $ \PrR \subset \Cat_{\infty} $ for the sub-$ (\infty,2) $-category of presentable $ (\infty,1) $-categories, \textit{right} adjoints, and all natural tranformations.
We write $ \PrL \subset \Cat_{\infty} $ for the sub-$ (\infty,2) $-category of presentable $ (\infty,1) $-categories, \textit{left} adjoints, and all natural tranformations.


\section{Preliminaries on adjointability \& tensor products}\label{sec:prelim}

In this section we recall the basics of left adjointable squares and tensor products with presentable \categories.
\Cref{subsec:adjointability} fixes our conventions on adjointability and gives some examples of adjointable squares.
\Cref{subsec:tensor} recalls tensor products of presentable \categories.
\Cref{subsec:tensoradj} gives an example explaining why tensoring does not generally preserve left adjointable squares of presentable \categories (\Cref{ex:trunSpcsquare}).
We also provide a class of left adjointable squares that are preserved by tensoring with any presentable \category (\Cref{prop:extrartadj}).


\subsection{Oriented squares \& adjointability}\label{subsec:adjointability}

We begin by fixing conventions for adjointability in an $ (\infty,2) $-category.

\begin{definition}
	Let $ \Cbf $ be an $ (\infty,2) $-category, and $ A $, $ B $, $ C $, and $ D $ objects of $ \Cbf $.
	We exhibit data of $ 1 $-morphisms $ \flowerstar \colon\fromto{B}{D}$, $ \glowerstar\colon\fromto{C}{D}$, $ \gbarlowerstar \colon\fromto{A}{B}$, and $ \fbarlowerstar \colon \fromto{A}{C}$, along with a $ 2 $-morphism $ \sigma \colon \fromto{\glowerstar \fbarlowerstar}{\gbarlowerstar\flowerstar} $ by a single square
	\begin{equation}\label{square:catBCsquare}
		\begin{tikzcd}
			A \arrow[r, "\fbarlowerstar"] \arrow[d, "\gbarlowerstar"'] & C \arrow[d, "\glowerstar"] \arrow[dl, phantom, "\scriptstyle \sigma" below right, "\Longleftarrow" sloped] \\ 
			B \arrow[r, "\flowerstar"'] & D \period
		\end{tikzcd}
	\end{equation}
	We refer to such a square as an \defn{oriented square} in $ \Cbf $.
\end{definition}

\begin{definition}\label{def:basechangeconditions}
	Let $ \Cbf $ be an $ (\infty,2) $-category and consider an oriented square \eqref{square:catBCsquare} in $ \Cbf $.
	\begin{enumerate}[label=\stlabel{def:basechangeconditions}, ref=\arabic*]
		\item\label{def:basechangeconditions.1} Assume that the $ 1 $-morphisms $ \flowerstar $ and $ \fbarlowerstar $ admit left adjoints $ \fupperstar $ and $ \fbarupperstar $, respectively.
		Write $ \counit_f \colon \fromto{\fupperstar \flowerstar}{\id{B}}$ for the counit and $ \unit_{\fbar} \colon \fromto{\id{C}}{\fbarlowerstar\fbarupperstar}$ for the unit.
		The \defn{left exchange transformation} associated to the oriented square \eqref{square:catBCsquare} is the composite $ 2 $-morphism
		\begin{equation*}
			\begin{tikzcd}[sep=2.75em]
				\Ex_{\sigma} \colon \fupperstar \glowerstar \arrow[r, "\fupperstar \glowerstar\unit_{\fbar}"] & \fupperstar \glowerstar\fbarlowerstar\fbarupperstar \arrow[r, "\fupperstar \sigma \fbarupperstar"] & \fupperstar \flowerstar\gbarlowerstar\fbarupperstar \arrow[r, "\counit_f \gbarlowerstar\fbarupperstar"] & \gbarlowerstar\fbarupperstar \period 
			\end{tikzcd}
		\end{equation*}	
		We say that the square \eqref{square:catBCsquare} is \defn{(horizontally) left adjointable} if the exchange transformation $ \Ex_{\sigma} \colon \fupperstar \glowerstar \to \gbarlowerstar\fbarupperstar $ is an equivalence.

		\item\label{def:basechangeconditions.2} Assume that the $ 1 $-morphisms $ \glowerstar $ and $ \gbarlowerstar $ admit right adjoints $ \guppersharp $ and $ \gbaruppersharp $, respectively.
		Write $ \counit_{\gbar} \colon \fromto{\gbarlowerstar \gbaruppersharp}{\id{B}} $ for the counit and $ \unit_g \colon \fromto{\id{C}}{\guppersharp \glowerstar}$ for the unit.
		The \defn{right exchange transformation} associated to the oriented square \eqref{square:catBCsquare} is the composite $ 2 $-morphism
		\begin{equation*}
			\begin{tikzcd}[sep=2.75em]
				\fbarlowerstar\gbaruppersharp \arrow[r, "\unit_g \fbarlowerstar\gbaruppersharp"] & \guppersharp\glowerstar\fbarlowerstar\gbaruppersharp \arrow[r, "\guppersharp\sigma \gbaruppersharp"] & \guppersharp\flowerstar\gbarlowerstar\gbaruppersharp \arrow[r, "\guppersharp\flowerstar \counit_{\gbar}"] & \guppersharp\flowerstar \period 
			\end{tikzcd}
		\end{equation*}
		We say that the square \eqref{square:catBCsquare} is \defn{(vertically) right adjointable} if the exchange transformation $ \fbarlowerstar\gbaruppersharp \to \guppersharp\flowerstar $ is an equivalence.
	\end{enumerate}
\end{definition}

\begin{remark}
	We follow Hoyois \cites{MR3302973}{MR3570135} in calling the morphism $ \Ex_{\sigma} $ the \textit{exchange transformation}.
	The natural transformation $ \Ex_{\sigma} $ is often referred to as a \textit{Beck--Chevalley transformation} \cites{MR255631}[\S2.2]{arXiv:1811.02057}[Notation 4.1.1]{HopkinsLurie:ambidexterity}, \textit{basechange transformation} \cite[Definition 7.1.1]{exodromy}, or \textit{mate transformation} \cites[\S1]{MR3189430}{arXiv:2011.08808}[\S2.2]{MR0357542}.
	Instead of $ \Ex $, the notations $ \BC $ (for Beck--Chevalley or basechange) and $ \beta $ are often used \cites[Definition 7.1.1]{exodromy}[\S2.2]{MR0357542}[Notation 4.1.1]{HopkinsLurie:ambidexterity}.
\end{remark}

\begin{remark}[(on notation)]
	The notation we have chosen is meant to provide an easy way to remember the specifics of left exchange transformations: the left exchange transformation goes from a composite with no bars to a composite with bars, and an oriented square is left adjointable if we can `exchange $ \fupperstar $ and $ \glowerstar $' at the cost of adding bars.
\end{remark}

In this paper, we are mostly concerned with \textit{left} adjointability, but right adjointability will also appear due to the following.

\begin{observation}\label{obs:adjointability}
	\hfill
	\begin{enumerate}[label=\stlabel{obs:adjointability}, ref=\arabic*]
		\item Since functors of $ (\infty,2) $-categories preserve adjunctions and their (co)units, functors of $ (\infty,2) $-categories preserve left/right adjointable oriented squares.

		\item If in the square \eqref{square:catBCsquare} $ \flowerstar $ and $ \fbarlowerstar $ admit left adjoints and $ \glowerstar $ and $ \gbarlowerstar $ admit right adjoints, then \eqref{square:catBCsquare} is horizontally left adjointable if and only if \eqref{square:catBCsquare} is vertically right adjointable.
	\end{enumerate}
\end{observation}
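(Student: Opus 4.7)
For part (1), my plan is to unwind the definition of $\Ex_\sigma$ in \Cref{def:basechangeconditions}. The exchange $2$-morphism is built from three pieces of $(\infty,2)$-categorical data: horizontal composition of $1$-morphisms, whiskering of the $2$-morphism $\sigma$, and the unit $\unit_{\fbar}$ and counit $\counit_f$ of the adjunctions $\fbarupperstar \dashv \fbarlowerstar$ and $\fupperstar \dashv \flowerstar$. A functor $F$ of $(\infty,2)$-categories preserves all three: it preserves horizontal and vertical composition by definition, and it preserves adjunctions together with their units and counits (since adjunctions can be characterized by the presence of these $2$-morphisms satisfying the triangle identities, which $F$ also preserves). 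Consequently $F(\Ex_\sigma) = \Ex_{F(\sigma)}$, and because any functor of $(\infty,2)$-categories sends equivalences to equivalences, horizontal left adjointability is preserved. The right adjointable case is completely parallel.

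For part (2), the plan is to identify the horizontal and vertical exchange morphisms as mates under an adjunction. Observe that $\fupperstar \glowerstar$ is left adjoint to $\guppersharp \flowerstar$ (as a composite of left adjoints) and $\gbarlowerstar \fbarupperstar$ is left adjoint to $\fbarlowerstar \gbaruppersharp$. Any $2$-morphism between a pair of $1$-morphisms admitting right adjoints has a canonical mate, obtained by conjugating with the relevant units and counits, and passing to the mate is an equivalence on mapping anima (it is induced by the hom-equivalences supplied by the two adjunctions).

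The main technical step is then to check that the mate of the horizontal left exchange $\Ex_\sigma\colon \fupperstar \glowerstar \to \gbarlowerstar \fbarupperstar$ is exactly the vertical right exchange $\fbarlowerstar \gbaruppersharp \to \guppersharp \flowerstar$ attached to the same oriented square. This is a diagram chase: one composes $\Ex_\sigma$ on the left with $\unit$ for $\fupperstar \glowerstar \dashv \guppersharp \flowerstar$ and on the right with $\counit$ for $\gbarlowerstar \fbarupperstar \dashv \fbarlowerstar \gbaruppersharp$, and repeatedly applies the triangle identities to collapse four of the six units/counits involved, leaving exactly the composite described in \Cref{def:basechangeconditions}\eqref{def:basechangeconditions.2}. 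Since the mate correspondence is an equivalence of spaces of $2$-morphisms, it carries equivalences to equivalences in both directions, which gives the claimed biconditional.

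I expect the main obstacle to be the bookkeeping in the diagram chase identifying the two exchange transformations as mates; in a $1$-categorical setting this is a classical computation, but to make it precise in an $(\infty,2)$-category $\Cbf$ one either works directly with coherent string-diagram presentations in $\Cbf$, or (more cheaply) reduces to the universal example by the first part of the observation: the free $(\infty,2)$-category on an oriented square whose horizontal morphisms have left adjoints and whose vertical morphisms have right adjoints maps into $\Cbf$, so it suffices to verify the identification of mates once and for all there, where the ambient theory is again $1$-categorical and standard.
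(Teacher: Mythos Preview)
Your proposal is correct. The paper gives no separate proof of this observation: part (1) carries its justification in the statement itself, which you have correctly unpacked, and part (2) is asserted without argument. Your approach to part (2)---recognizing the right exchange transformation as the mate of the left exchange transformation under the composite adjunctions---is the standard one and is correct; the identification is classical in the $2$-categorical setting, and your suggested reduction to a universal example via part (1) is a clean way to handle any coherence concerns in the $(\infty,2)$-categorical setting.
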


\begin{notation}\label{ntn:generalsquare}
	For the rest of this paper, we fix an oriented square of \categories
	\begin{equation}\tag{$ \mdlgwhtsquare $}\label{sq:generalsquare}
		\begin{tikzcd}
			A \arrow[r, "\fbarlowerstar"] \arrow[d, "\gbarlowerstar"'] & C \arrow[d, "\glowerstar"] \arrow[dl, phantom, "\scriptstyle \sigma" below right, "\Longleftarrow" sloped] \\ 
			B \arrow[r, "\flowerstar"'] & D \comma
		\end{tikzcd}
	\end{equation}
	where the functors $ \flowerstar $ and $ \fbarlowerstar $ admit left adjoints $ \fupperstar $ and $ \fbarupperstar $, respectively.
\end{notation}

\begin{convention}
	Unless explicitly stated otherwise, adjointability of an oriented square of (presentable) \categories \eqref{sq:generalsquare} refers to adjointability in the $ (\infty,2) $-category $ \Cat_{\infty} $.
\end{convention}

We finish this subsection with two examples of left adjointable squares. 
The first is an easy-to-state version of the Smooth and Proper Basechange Theorem in algebraic geometry.

\begin{example}
	Let $ k $ be an algebraically closed field and let
	\begin{equation*}
		\begin{tikzcd}
			W \arrow[r, "\fbar"] \arrow[d, "\gbar"'] \arrow[dr, phantom, very near start, "\lrcorner", xshift=-0.25em, yshift=0.25em] & Y \arrow[d, "g"] \\ 
			X \arrow[r, "f"'] & Z 
		\end{tikzcd}
	\end{equation*}
	be a pullback square of quasiprojective $ k $-schemes.
	Let $ \el $ be a prime number different from the characteristic of $ k $.
	The Smooth and Proper Basechange Theorem in étale cohomology says that if the morphism $ f $ is smooth or the morphism $ g $ is proper, then the induced square 
	\begin{equation*}
		\begin{tikzcd}
			\Sh_{\et}(W;\ZZ_\el) \arrow[r, "\fbarlowerstar"] \arrow[d, "\gbarlowerstar"'] & \Sh_{\et}(Y;\ZZ_\el) \arrow[d, "\glowerstar"] \\ 
			\Sh_{\et}(X;\ZZ_\el) \arrow[r, "\flowerstar"'] & \Sh_{\et}(Z;\ZZ_\el)
		\end{tikzcd}
	\end{equation*}
	of \categories of $ \el $-adic étale sheaves is left adjointable \cite[Theorem 2.4.2.1]{MR3887650}.
\end{example}

\begin{example}\label{ex:Spcsquaregeneral}
	Let $ X $ and $ Y $ be spaces, and consider the canonically commutative square of presentable \categories and right adjoints
	\begin{equation}\label{sq:Spcsquare}
		\begin{tikzcd}[column sep=3.5em, row sep=2em]
			\Spc \arrow[r, "{\Map(X,-)}"] \arrow[d, "{\Map(Y,-)}"'] & \Spc \arrow[d, "{\Map(Y,-)}"] \\ 
			\Spc \arrow[r, "{\Map(X,-)}"'] & \Spc \period
		\end{tikzcd}
	\end{equation}
	The associated exchange morphism
	\begin{equation}\label{eq:MapEx}
		X \cross \Map(Y,-) \longrightarrow \Map(Y,X \cross (-)) \equivalent \Map(Y,X) \cross \Map(Y,-)
	\end{equation}
	is the product of the diagonal map $ \fromto{X}{\Map(Y,X)} $ with the identity map on $ \Map(Y,-) $.
	In particular, the exchange morphism \eqref{eq:MapEx} is an equivalence if and only if the diagonal map $ \fromto{X}{\Map(Y,X)} $ is an equivalence.
\end{example}

\begin{nul}\label{rec:Sullivan}
	One version of the Sullivan Conjecture (proven by Carlsson \cite{MR1091616}, Lannes \cite{MR1179079}, and Miller \cites{MR699318}{MR750716}{MR794376}{MR934259}) says that if $ X $ is a finite space and $ Y $ is a connected \pifinite space, then the diagonal map $ \fromto{X}{\Map(Y,X)} $ is an equivalence.
	Thus, under these hypotheses, the square \eqref{sq:Spcsquare} is left adjointable.
\end{nul}


\subsection{Tensor products of presentable \texorpdfstring{$\infty$}{∞}-categories}\label{subsec:tensor}

\begin{recollection}\label{rec:HA.4.8.1.17}
	Let $ S $ and $ E $ be presentable \categories.
	The \defn{tensor product} of presentable \categories $ S \tensor E $ along with the functor
	\begin{equation*}
		\tensor \colon \fromto{S \cross E}{S \tensor E}
	\end{equation*}
	are characterized by the following universal property: for any presentable \category $ T $, restriction along $ \tensor $ defines an equivalence 
	\begin{equation*}
		\Fun^{\colim}(S \tensor E,T) \equivalence \Fun^{\colim,\colim}(S \cross E,T) 
	\end{equation*}
	between colimit-preserving functors $ \fromto{S \tensor E}{T} $ and functors $ \fromto{S \cross E}{T} $ that preserve colimits separately in each variable.
	The tensor product of presentable \categories defines a functor
	\begin{equation*}
		\tensor \colon \fromto{\PrL \cross \PrL}{\PrL}
	\end{equation*}
	and can be used to equip $ \PrL $ with the structure of a symmetric monoidal $ (\infty,2) $-category.
	See \cites[Chapter 1, \S6.1]{MR3701352}[\S4.4]{MR3607274} for this statement for presentable stable \categories; the proof is exactly the same without the stability hypothesis.

	Since the $ (\infty,2) $-category $ \PrR $ of presentable \categories and right adjoints is obtained from $ \PrL $ by reversing $ 1 $-morphisms and $ 2 $-morphisms, the tensor product also defines a symmetric monoidal structure on $ \PrR $.
	In this note, we are more interested in the tensor product on $ \PrR $.
	This has a very explicit description: there is a natural equivalence of \categories
	\begin{equation*}
		S \tensor E \equivalent \Funlim(E^{\op},S) 
	\end{equation*}
	\HA{Proposition}{4.8.1.17}.
	Moreover, there is a natural equivalence
	\begin{equation*}
		(-) \tensor E \equivalent \Funlim(E^{\op},-)
	\end{equation*}
	of functors of $ (\infty,2) $-categories $ \fromto{\PrR}{\PrR} $.
	In particular, if $ \plowerstar \colon \fromto{S}{T} $ is a right adjoint functor of presentable \categories, then the induced right adjoint
	\begin{equation*}
		\plowerstar \tensor E \colon S \tensor E \equivalent \Funlim(E^{\op},S) \to \Funlim(E^{\op},T) \equivalent T \tensor E
	\end{equation*}
	is given by post-composition with $ \plowerstar $.
	For the purposes of this work, it suffices to take
	\begin{equation*}
		\Funlim(E^{\op},-) \colon \fromto{\PrR}{\PrR}
	\end{equation*}
	as the \textit{definition} of the tensor product $ (-) \tensor E $.
\end{recollection}


\begin{observation}\label{obs:adjointcompat}
	Let $ h \colon \fromto{S}{S'} $ and $ v \colon \fromto{E}{E'} $ be functors between presentable \categories which are both left adjoints or both right adjoints.
	Then the square
	\begin{equation*}
		\begin{tikzcd}[sep=3em]
			S \tensor E \arrow[r, "h \tensor E"] \arrow[d, "S \tensor v"'] & S' \tensor E \arrow[d, "S' \tensor v"] \\ 
			S \tensor E' \arrow[r, "h \tensor E'"'] & S' \tensor E'
		\end{tikzcd}
	\end{equation*}
	canonically commutes: both composites are identified with $ h \tensor v $.
\end{observation}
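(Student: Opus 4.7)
The plan is to deduce the statement from the bifunctoriality of $\tensor$: Recollection~\ref{rec:HA.4.8.1.17} packages $\tensor$ as part of a symmetric monoidal structure on the $(\infty,2)$-category $\PrL$ (and equivalently on $\PrR$), and in particular provides $(\infty,2)$-functors
\[
    \tensor \colon \PrL \cross \PrL \longrightarrow \PrL \qquad \text{and} \qquad \tensor \colon \PrR \cross \PrR \longrightarrow \PrR .
\]
I will use the $\PrL$-version when both $h$ and $v$ are left adjoints, and the $\PrR$-version when both are right adjoints.

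The key input is a general fact about any functor of $(\infty,2)$-categories $F \colon \mathcal{C} \cross \mathcal{D} \to \mathcal{E}$: for $1$-morphisms $h \colon c \to c'$ in $\mathcal{C}$ and $v \colon d \to d'$ in $\mathcal{D}$, there are canonical identifications
\[
    (h, v) \simeq (h, \id{d'}) \circ (\id{c}, v) \simeq (\id{c'}, v) \circ (h, \id{d})
\]
in the product $\mathcal{C} \cross \mathcal{D}$. Applying $F$ and using its functoriality yields a canonical commutative square
\[
    \begin{tikzcd}
        F(c, d) \arrow[r, "{F(h, \id{d})}"] \arrow[d, "{F(\id{c}, v)}"'] & F(c', d) \arrow[d, "{F(\id{c'}, v)}"] \\
        F(c, d') \arrow[r, "{F(h, \id{d'})}"'] & F(c', d')
    \end{tikzcd}
\]
in $\mathcal{E}$ whose two composites are both canonically identified with $F(h, v)$.

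Applying this with $F = \tensor$, $(c, d) = (S, E)$, and $(c', d') = (S', E')$, and unpacking the conventions $\tensor(h, \id{E}) = h \tensor E$ and $\tensor(\id{S}, v) = S \tensor v$, immediately produces the square in the statement together with the identification of both composites with $h \tensor v \colonequals \tensor(h, v)$. There is no real obstacle here: the statement is essentially a tautology given bifunctoriality. The only nontrivial point is that the symmetric monoidal structure recalled above genuinely furnishes $\tensor$ with $(\infty,2)$-categorical functoriality in both variables on each of $\PrL$ and $\PrR$, which is part of what is recorded in the references cited in Recollection~\ref{rec:HA.4.8.1.17}.
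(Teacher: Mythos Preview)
Your proposal is correct and matches the paper's approach: the paper presents this as an \emph{Observation} with no proof beyond the parenthetical remark that ``both composites are identified with $h \tensor v$,'' which is exactly the bifunctoriality of $\tensor$ on $\PrL$ (resp.\ $\PrR$) that you spell out. Your elaboration is a faithful unpacking of what the paper leaves implicit.
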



\subsection{Interaction between tensor products and adjointability}\label{subsec:tensoradj}

Now we give an example showing that the functor $ (-) \tensor E \colon \fromto{\PrR}{\PrR} $ need not preserve left adjointability of oriented squares (so that \Cref{thm:main} is not completely trivial).
The problem here is that we are interested in adjointability in $ \Cat_{\infty} $, rather the much stronger notions of adjointability in $ \PrR $ or $ \PrL $.
Said differently, the composite functors $ \fupperstar \glowerstar $ and $ \gbarlowerstar\fbarupperstar $ involved in the exchange transformation are not generally right or left adjoints.
Hence the condition that the exchange morphism $ \fromto{\fupperstar \glowerstar}{\gbarlowerstar\fbarupperstar} $ be an equivalence is not expressible internally to $ \PrR $ or $ \PrL $, thus need not be preserved by the functor of $ (\infty,2) $-categories $ (-) \tensor E $.

The following example is a variant of \Cref{ex:Spcsquaregeneral}; we learned of it from Lurie.

\begin{example}\label{ex:trunSpcsquare}
	Let $ p $ be a prime number and write $ \BCp $ for the classifying space of the cyclic group $ \Cp $ of order $ p $.
	Let $ X $ be a connected finite space such that \smash{$ \uppi_1(X) \isomorphic \Cp $}.
	(For example, take $ p = 2 $ and \smash{$ X = \RP^2 $}.)
	As a special case of \cref{rec:Sullivan}, the square of presentable \categories
	\begin{equation}\label{sq:SpcsquareSullivan}
		\begin{tikzcd}[column sep=5.5em, row sep=3em]
			\Spc \arrow[r, "{\Map(X,-)}"] \arrow[d, "{\Map(\BCp,-)}"'] & \Spc \arrow[d, "{\Map(\BCp,-)}"] \\ 
			\Spc \arrow[r, "{\Map(X,-)}"'] & \Spc 
		\end{tikzcd}
	\end{equation}
	is left adjointable.
	We claim that the induced square of presentable \categories
	\begin{equation}\tag*{$ (\textup{\ref{sq:SpcsquareSullivan}}) \tensor \Spc_{\leq 1} $}\label{sq:Spcsquaretrun}
		\begin{tikzcd}[column sep=5.5em, row sep=3em]
			\Spc_{\leq 1} \arrow[r, "{\Map(\trun_{\leq 1} X,-)}"] \arrow[d, "{\Map(\BCp,-)}"'] & \Spc_{\leq 1} \arrow[d, "{\Map(\BCp,-)}"] \\ 
			\Spc_{\leq 1} \arrow[r, "{\Map(\trun_{\leq 1} X,-)}"'] & \Spc_{\leq 1} 
		\end{tikzcd}
	\end{equation}
	is \textit{not} left adjointable.
	To see this note that the square \ref*{sq:Spcsquaretrun} is left adjointable if and only if the diagonal morphism
	\begin{equation*}
		\delta \colon \BCp \equivalent \trun_{\leq 1} X \longrightarrow \Map(\BCp, \trun_{\leq 1} X) \equivalent \Map(\BCp,\BCp) 
	\end{equation*}
	is an equivalence.
	However, the morphism $ \delta $ is not an equivalence: $ \uppi_0 \BCp \equivalent \ast $ and 
	\begin{equation*}
		\uppi_0\! \Map(\BCp,\BCp) \isomorphic \Hom_{\Ab}(\Cp,\Cp) \isomorphic \Cp \period
	\end{equation*}
\end{example}

When the functors $ \glowerstar $ and $ \gbarlowerstar $ are left adjoints, the requirement that the exchange transformation $ \fromto{\fupperstar \glowerstar}{\gbarlowerstar\fbarupperstar} $ be an equivalence \textit{is} expressible internally to $ \PrL $, hence is preserved by tensoring with any presentable \category:

\begin{lemma}\label{prop:extrartadj}
	Let $ E $ be a presentable \category.
	Assume that:
	\begin{enumerate}[label=\stlabel{prop:extrartadj}, ref=\arabic*]
		\item\label{prop:extrartadj.1} \eqref{sq:generalsquare} is an oriented square in $ \PrR $.

		\item\label{prop:extrartadj.2} The right adjoints $ \glowerstar \colon \fromto{C}{D} $ and $ \gbarlowerstar \colon \fromto{A}{B} $ admit right adjoints $ \guppersharp $ and $ \gbaruppersharp $, respectively.
		
		\item\label{prop:extrartadj.3} The oriented square \eqref{sq:generalsquare} is left adjointable.
	\end{enumerate}
	Then the functors $ \glowerstar \tensor E $ and $ \gbarlowerstar \tensor E $ are left adjoints and the oriented square $ \textup{\eqref{sq:generalsquare}} \tensor E $ is left adjointable.
\end{lemma}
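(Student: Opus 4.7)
The plan is to reformulate the left adjointability of \eqref{sq:generalsquare} as a condition internal to the $ (\infty,2) $-category $ \PrR $, and then to exhibit it as preserved by the functor $ (-) \tensor E \colon \PrR \to \PrR $ of \cref{rec:HA.4.8.1.17}, which is a functor of $ (\infty,2) $-categories. Hypothesis (\ref{prop:extrartadj.2}) is what enables this internal reformulation: without it, the composites $ \fupperstar \glowerstar $ and $ \gbarlowerstar \fbarupperstar $ need not themselves be left or right adjoints, so the exchange $ 2 $-morphism $ \Ex_{\sigma} $ is not visible as a $ 2 $-morphism of $ \PrL $ or $ \PrR $ (compare \Cref{ex:trunSpcsquare}).

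First, I would apply part~(2) of \Cref{obs:adjointability}: since $ \flowerstar $ and $ \fbarlowerstar $ admit left adjoints and, by hypothesis (\ref{prop:extrartadj.2}), $ \glowerstar $ and $ \gbarlowerstar $ admit right adjoints $ \guppersharp $ and $ \gbaruppersharp $, the horizontal left adjointability of \eqref{sq:generalsquare} is equivalent to its vertical right adjointability. That is, the right exchange $ 2 $-morphism $ \fbarlowerstar \gbaruppersharp \to \guppersharp \flowerstar $ is an equivalence in $ \Cat_{\infty} $. The crucial gain is that this is now a $ 2 $-morphism between two $ 1 $-morphisms of $ \PrR $, namely composites of right adjoints of presentable \categories.

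Next, I would apply $ (-) \tensor E \colon \PrR \to \PrR $ and invoke part~(1) of \Cref{obs:adjointability} to deduce two things simultaneously. First, the adjunctions $ \glowerstar \dashv \guppersharp $ and $ \gbarlowerstar \dashv \gbaruppersharp $ are carried to adjunctions $ \glowerstar \tensor E \dashv \guppersharp \tensor E $ and $ \gbarlowerstar \tensor E \dashv \gbaruppersharp \tensor E $, which already proves the first claim of the lemma. Second, since the right exchange $ 2 $-morphism is built purely from $ (\infty,2) $-categorical data ($ \sigma $ together with the unit of $ \glowerstar \dashv \guppersharp $ and the counit of $ \gbarlowerstar \dashv \gbaruppersharp $), the functor $ (-) \tensor E $ carries it to the right exchange of the tensored square $ \textup{\eqref{sq:generalsquare}} \tensor E $, which is therefore an equivalence.

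Finally, I would invoke part~(2) of \Cref{obs:adjointability} once more, applied to $ \textup{\eqml{\eqref{sq:generalsquare}}} \tensor E $: this square has the requisite left adjoints ($ \fupperstar \tensor E $ and $ \fbarupperstar \tensor E $) and right adjoints (constructed above), and its vertical right adjointability, just established, yields its horizontal left adjointability. The main step requiring genuine care is the identification of $ \Ex_{\sigma} \tensor E $ with the right exchange of the tensored square in the previous paragraph; the needed compatibility between the $ \PrL $- and $ \PrR $-tensor structures on the units and counits of the $ g $- and $ \gbar $-adjunctions is precisely what \Cref{obs:adjointcompat} provides.
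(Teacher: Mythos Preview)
Your proposal is correct and follows essentially the same route as the paper: convert horizontal left adjointability to vertical right adjointability via \Cref{obs:adjointability}, observe that the latter is expressible internally to $\PrR$ by hypothesis~(\ref{prop:extrartadj.2}), push it through the $(\infty,2)$-functor $(-)\tensor E$, and convert back. One small point: your closing invocation of \Cref{obs:adjointcompat} is misplaced---that observation concerns commuting horizontal and vertical tensor factors, whereas what you actually need (and already have) is that a functor of $(\infty,2)$-categories sends the right exchange $2$-morphism of \eqref{sq:generalsquare} to the right exchange $2$-morphism of $\textup{\eqref{sq:generalsquare}}\tensor E$, which is immediate from functoriality on units, counits, and $\sigma$.
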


\begin{proof}
	Assumption \enumref{prop:extrartadj}{2} implies and $ \glowerstar \tensor E $ and $ \gbarlowerstar \tensor E $ are left adjoint to $ \guppersharp \tensor E $ and $ \gbaruppersharp \tensor E $, respectively.
	Also note that by assumption $ \glowerstar $ and $ \gbarlowerstar $ admit right adjoints in the $ (\infty,2) $-category $ \PrR $.
	In light of \Cref{obs:adjointability}, assumption \enumref{prop:extrartadj}{3} implies that the square \eqref{sq:generalsquare} is vertically right adjointable in the $ (\infty,2) $-category $ \PrR $.
	Since functors of $ (\infty,2) $-categories preserve right adjointability, the square $ \textup{\eqref{sq:generalsquare}} \tensor E $ is vertically right adjointable in the $ (\infty,2) $-category $ \PrR $.
	Hence $ \textup{\eqref{sq:generalsquare}} \tensor E $ is vertically right adjointable in the $ (\infty,2) $-category $ \Cat_{\infty} $.
	Again applying \Cref{obs:adjointability}, we conclude that the square is horizontally left adjointable in the $ (\infty,2) $-category $ \Cat_{\infty} $.
\end{proof}

\section{Compactly generated \texorpdfstring{$ \infty $}{∞}-categories}\label{sec:CG}

In this section we recall a few facts about compactly generated \categories (\cref{subsec:CGreminder}) and give an explicit description of the tensor product with a compactly generated \category (\cref{subsec:tensorcg}).
We then give two useful applications of this description:
\begin{enumerate}
	\item In \cref{subsec:conservativity}, we show that many properties of a left adjoint between presentable \categories are preserved by tensoring with a compactly generated \category.

	\item In \cref{subsec:commuting_tensors_past_limits}, we show that tensoring with a compactly generated \category preserves limits of diagrams of presentable \categories and left exact left adjoints.

	\item In \cref{subsec:recollements}, we show that recollements of presentable \categories are preserved by tensoring with a compactly generated or stable presentable \category.
\end{enumerate} 


\subsection{Notations \& definitions}\label{subsec:CGreminder}

\begin{notation}
	Let $ E $ be \acategory with filtered colimits.
	We write $ \Ecpt \subset E $ for the full subcategory spanned by the compact objects.

	Recall that if $ E $ is compactly generated, then $ \Ecpt \subset E $ is closed under finite colimits and retracts.
	Moreover, $ E $ is the \defn{$ \Ind $-completion} of $ \Ecpt $.
	That is, $ E $ is obtained from $ \Ecpt $ by freely adjoining filtered colimits.
\end{notation}

We are also interested in an enlargement of the class of compactly generated \categories.

\begin{recollection}[(compactly assembled \categories)]\label{rec:compactly_assembled}
	A presentable \category $ E $ is \textit{compactly assembled} if $ E $ is a retract in $ \PrL $ of a compactly generated \category \cite[\SAGthm{Definition}{21.1.2.1} \& \SAGthm{Theorem}{21.1.2.18}]{SAG}.
	As a consequence of \SAG{Theorem}{21.1.2.10}, in a compactly assembled \category, filtered colimits are left exact.
\end{recollection}

\begin{nul}
	There are many important examples of presentable \categories which are compactly assembled but not compactly generated.
	For example, let $ M $ be a noncompact positive-dimensional topological manifold.
	Then the initial object is the only compact object of the \topos $ \Sh(M) $.
	(See \cite{MR1874232} for the stable variant of this statement.)
	However, the \topos of sheaves on a locally compact Hausdorff space is always compactly assembled \SAG{Proposition}{21.1.7.1}.
\end{nul}

\begin{recollection}[(projectively generated \categories)]\label{rec:compactproj}
	Let $ E $ be \acategory with sifted colimits, and let $ X \in E $.
	We say that $ X $ is \defn{projective} if $ \Map_E(X,-) \colon \fromto{E}{\Spc} $ preserves geometric realizations of simplicial objects.
	We say that $ X $ is \defn{compact projective} if $ X $ is compact and projective, i.e., $ \Map_E(X,-) \colon \fromto{E}{\Spc} $ preserves sifted colimits.
	We write $ \Ecptproj \subset E $ for the full subcategory spanned by the compact projective objects.

	We say that $ E $ is \defn{projectively generated} if there is a small collection of \textit{compact} projective objects of $ E $ that generate $ E $ under small colimits \HTT{Definition}{5.5.8.23}.
	In this case, $ \Ecptproj $ is closed under finite coproducts and retracts in $ E $.
	Moreover, $ E $ is the \defn{nonabelian derived category} of $ \Ecptproj $.
	That is, $ E $ is obtained from $ \Ecptproj $ by freely adjoining sifted colimits.
	See \cite[Propositions \HTTthmlink{5.5.8.15} \& \HTTthmlink{5.5.8.25}]{HTT}. 
\end{recollection}

\begin{example}
	The following \categories are projectively generated: the \category of spaces, the \category of connective modules over a connective $ \Eup_1 $-ring spectrum \HA{Proposition}{7.1.4.15}, the \category of animated (aka simplicial commutative) rings, and (up to set-theoretic issues) the \category of condensed/pyknotic spaces \cites[\S13.3]{exodromy}{pyknoticI}{Scholze:condensednotes}.
\end{example}

\begin{definition}
	We say that a presentable \category $ E $ is \textit{projectively assembled} if $ E $ is a retract in $ \PrL $ of a projectively generated \category.
\end{definition}


\subsection{Tensor products with compactly generated \texorpdfstring{$\infty$}{∞}-categories}\label{subsec:tensorcg}

Now we provide alternative models for tensor products with compactly generated \categories.
The key point is that these alternative models give us access to an explicit description of the action of the Lurie tensor product on a left adjoint functor.
These observations are known to experts (see \cites[\S2.3.1]{arXiv:1902.03404}[\S B.1]{arXiv:2012.10777}); we have included the material here because we were unable to locate a reference saying everything we need.

We begin with some terminology.

\begin{definition}\label{def:Klim}
	Let $ \Kcal $ be a collection of \categories.
	\begin{enumerate}[label=\stlabel{def:Klim}, ref=\arabic*]
		\item We say that \acategory $ I $ \defn{admits $ \Kcal $-shaped limits} if for each $ K \in \Kcal $, the \category $ I $ admits limits of $ K $-shaped diagrams.

		\item Given \categories $ I $ and $ S $ that admit $ \Kcal $-shaped limits, we say that a functor $ F \colon \fromto{I}{S} $ \defn{preserves limits of $ \Kcal $-shaped diagrams} if for each $ K \in \Kcal $, the functor $ F $ preserves limits of $ K $-shaped diagrams.

		\item Given \categories $ I $ and $ S $ that admit $ \Kcal $-shaped limits, we write $ \FunKlim(I,S) \subset \Fun(I,S) $ for the full subcategory spanned by those functors that preserve limits of $ \Kcal $-shaped diagrams.

		\item We write $ \CatKlim \subset \Cat_{\infty} $ for the (non-full) sub-$ (\infty,2) $-category of \categories admitting $ \Kcal $-shaped limits, functors preserving $ \Kcal $-shaped limits, and all natural transformations.

		\item If $ \Kcal $ is the collection of \textit{finite \categories}, we write $ \Funlex \colonequals \FunKlim $ and $ \Catlex \colonequals \CatKlim $.

		\item If $ \Kcal $ is the collection of \textit{finite sets}, we write $ \Funcross \colonequals \FunKlim $ and $ \Catfp \colonequals  \CatKlim $.
	\end{enumerate}
\end{definition}

\begin{observation}\label{obs:CGtensorFunlex}
	Let $ S $ and $ E $ be presentable \categories.
	\begin{enumerate}[label=\stlabel{obs:CGtensorFunlex}, ref=\arabic*]
		\item\label{obs:CGtensorFunlex.1} If $ E $ is compactly generated, then restriction along the inclusion $ \incto{\Ecptop}{E^{\op}} $ defines an equivalence of \categories
		\begin{equation*}\label{eq:tensorlex}
			\Funlim(E^{\op},S) \equivalence \Funlex(\Ecptop,S) \period
		\end{equation*}
		Hence the tensor product $ (-) \tensor E $ fits into a commutative square of functors of $ (\infty,2) $-categories
		\begin{equation*}
			\begin{tikzcd}[row sep=2em,column sep=6em]
				\PrR \arrow[d] \arrow[r, "{(-) \tensor E}"] & \PrR \arrow[d] \\
				\Catlex \arrow[r, "{\Funlex(\Ecptop,-)}"'] & \Catlex \period
			\end{tikzcd}
		\end{equation*}
		Here the vertical functors are inclusions of non-full subcategories.

		\item\label{obs:CGtensorFunlex.2} If $ E $ is projectively generated, then restriction along the inclusion $ \incto{\Ecptprojop}{E^{\op}} $ defines an equivalence of \categories
		\begin{equation*}\label{eq:tensorcross}
			\Funlim(E^{\op},S) \equivalence \Funcross(\Ecptprojop,S) \period
		\end{equation*}
		Hence the tensor product $ (-) \tensor E $ fits into a commutative square of functors of $ (\infty,2) $-categories
		\begin{equation*}
			\begin{tikzcd}[row sep=2em,column sep=6em]
				\PrR \arrow[d] \arrow[r, "{(-) \tensor E}"] & \PrR \arrow[d] \\
				\Catfp \arrow[r, "{\Funcross(\Ecptprojop,-)}"'] & \Catfp \period
			\end{tikzcd}
		\end{equation*}
		Here the vertical functors are inclusions of non-full subcategories.
	\end{enumerate}
\end{observation}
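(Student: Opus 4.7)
The plan is to reduce both statements to standard universal properties — of $\Ind$-completions in part~\enumref{obs:CGtensorFunlex}{1} and of nonabelian derived categories in part~\enumref{obs:CGtensorFunlex}{2} — and then transport the resulting equivalences along the identification $(-) \tensor E \equivalent \Funlim(E^{\op},-)$ from \Cref{rec:HA.4.8.1.17}.

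For part~\enumref{obs:CGtensorFunlex}{1}, I would use that compact generation of $E$ means $E \equivalent \Ind(\Ecpt)$, with $\Ecpt \subset E$ essentially small and closed under finite colimits. The universal property of $\Ind$-completions, in the form dual to \HTT{Proposition}{5.3.5.10}, identifies the limit-preserving functors $\fromto{E^{\op}}{S}$ with the left exact functors $\fromto{\Ecptop}{S}$. Concretely: the restriction along $\incto{\Ecptop}{E^{\op}}$ of any limit-preserving functor is left exact because the inclusion $\incto{\Ecpt}{E}$ preserves finite colimits; such a functor is determined by its restriction because $E^{\op}$ is generated from $\Ecptop$ under cofiltered limits; and conversely any left exact functor $\fromto{\Ecptop}{S}$ extends uniquely to a limit-preserving functor on $E^{\op}$. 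Part~\enumref{obs:CGtensorFunlex}{2} has an identical shape: projective generation means $E$ is the nonabelian derived category of $\Ecptproj$ (\HTT{Proposition}{5.5.8.15}), which is closed under finite coproducts, and the dual universal property yields $\Funlim(E^{\op},S) \equivalent \Funcross(\Ecptprojop,S)$ by the same determined-by-restriction plus unique-extension argument, with left exactness replaced by finite-product preservation.

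To promote these pointwise equivalences to commutative squares of functors of $(\infty,2)$-categories, I would verify three points. First, the \categories $\Funlex(\Ecptop,S)$ and $\Funcross(\Ecptprojop,S)$ admit finite limits (resp.\ finite products) computed pointwise in $S$, so they lie in $\Catlex$ and $\Catfp$ respectively. Second, for a right adjoint $\plowerstar \colon \fromto{S}{T}$, postcomposition preserves left exact (resp.\ finite-product-preserving) functors because right adjoints preserve all limits, and the restriction equivalence is tautologically natural in $\plowerstar$ since both sides describe the action on $1$-morphisms by postcomposition. Third, functoriality on $2$-morphisms is automatic, as both sides are described by whiskering of natural transformations.

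The principal obstacle is bookkeeping rather than mathematics: one must carefully track which subcategory of $\Cat_{\infty}$ each functor lands in, and verify compatibility with the $(\infty,2)$-categorical rather than the $(\infty,1)$-categorical structure. Once the universal properties of $\Ind$ and of the nonabelian derived category are invoked, the rest is a formal unwinding.
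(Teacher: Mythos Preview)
Your proposal is correct and matches the paper's treatment: the paper records this as an \emph{observation} with no proof supplied, since it follows directly from the universal properties of $\Ind$-completion and the nonabelian derived category recalled just before (together with the identification $(-) \tensor E \equivalent \Funlim(E^{\op},-)$ from \Cref{rec:HA.4.8.1.17}). Your write-up is exactly the unpacking one would give if asked to justify the observation in detail.
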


\begin{observation}\label{obs:tensorlex}
	Let $ E $ be a compactly generated \category and $ \pupperstar \colon \fromto{T}{S} $ be a \textit{left exact} left adjoint between presentable \categories with right adjoint $ \plowerstar $.
	Note that we have a commutative diagram of \categories
	\begin{equation*}
		\begin{tikzcd}
			S \tensor E \arrow[d, "\plowerstar \tensor E"'] \arrow[r, "\sim"{yshift=-0.2em}] & \Funlim(E^{\op},S) \arrow[d, "\plowerstar \of -"'] \arrow[r, "\sim"{yshift=-0.2em}] & \Funlex(\Ecptop,S)  \arrow[d, "\plowerstar \of -"] \\
			T \tensor E \arrow[r, "\sim"{yshift=-0.2em}] & \Funlim(E^{\op},T) \arrow[r, "\sim"{yshift=-0.2em}] & \Funlex(\Ecptop,T) \period
		\end{tikzcd}
	\end{equation*}
	Moreover, since $ \pupperstar $ is left exact, the functor
	\begin{equation*}
		\pupperstar \of - \colon \Funlex(\Ecptop,T) \to \Funlex(\Ecptop,S)
	\end{equation*}
	given by post-composition with $ \pupperstar $ is left adjoint to the functor given by post-composition with $ \plowerstar $.
	Hence we have a commutative square of \categories
	\begin{equation*}
		\begin{tikzcd}
			T \tensor E \arrow[d, "\pupperstar \tensor E"'] \arrow[r, "\sim"{yshift=-0.2em}] & \Funlex(\Ecptop,T)  \arrow[d, "\pupperstar \of -"] \\
			S \tensor E \arrow[r, "\sim"{yshift=-0.2em}] & \Funlex(\Ecptop,S) \period
		\end{tikzcd}
	\end{equation*}
\end{observation}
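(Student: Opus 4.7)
The plan is to assemble both asserted commutative diagrams by stringing together the functorial identifications already recalled and then appealing to uniqueness of left adjoints. No deep input is required beyond Recollection~\ref{rec:HA.4.8.1.17} and Observation~\ref{obs:CGtensorFunlex}.

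The top commutative diagram is essentially automatic. The left square is the statement that under the natural equivalence $(-) \tensor E \equivalent \Funlim(E^{\op},-)$ of functors $\PrR \to \PrR$ from Recollection~\ref{rec:HA.4.8.1.17}, the functor $\plowerstar \tensor E$ is identified with post-composition by $\plowerstar$; this is exactly the content of that recollection applied to the morphism $\plowerstar \colon S \to T$ of $\PrR$. The right square is Observation~\ref{obs:CGtensorFunlex}\enumref{obs:CGtensorFunlex}{1} applied to both $S$ and $T$, together with the obvious naturality of restriction along $\incto{\Ecptop}{E^{\op}}$ in the target.

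Next I would check that post-composition with $\pupperstar$ preserves finite-limit-preservation, so that $\pupperstar \of -$ is a well-defined functor $\Funlex(\Ecptop,T) \to \Funlex(\Ecptop,S)$: this is immediate from the left exactness of $\pupperstar$ and the fact that a composite of finite-limit-preserving functors is finite-limit-preserving. The adjunction $\pupperstar \of - \dashv \plowerstar \of -$ between these $\Funlex$-categories is then obtained pointwise from the adjunction $\pupperstar \dashv \plowerstar$: the pointwise unit and counit assemble into natural transformations between the post-composition functors, and the triangle identities hold pointwise.

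Finally, the bottom commutative square follows from uniqueness of left adjoints. The tensor product of presentable \categories takes the adjunction $\pupperstar \dashv \plowerstar$ to an adjunction $\pupperstar \tensor E \dashv \plowerstar \tensor E$ by symmetric monoidality of $\tensor$ on $\PrL$ (Recollection~\ref{rec:HA.4.8.1.17}). Under the equivalence $(-) \tensor E \equivalent \Funlex(\Ecptop,-)$ provided by the top diagram, the right adjoint $\plowerstar \tensor E$ is identified with $\plowerstar \of -$, so $\pupperstar \tensor E$ must be identified with any left adjoint of $\plowerstar \of -$; by the previous step, one such left adjoint is $\pupperstar \of -$. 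The main obstacle, such as it is, is bookkeeping: keeping track of the identifications through two layers of equivalences and reconciling the $\PrL$- and $\PrR$-conventions used to speak about the tensor product and its effect on left versus right adjoints.
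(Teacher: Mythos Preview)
Your proposal is correct and matches the paper's reasoning exactly: the observation is stated without a separate proof, and the justification embedded in its statement (the first diagram from \Cref{rec:HA.4.8.1.17} and \enumref{obs:CGtensorFunlex}{1}, the pointwise adjunction from left exactness of $\pupperstar$, and the identification of $\pupperstar \tensor E$ with $\pupperstar \of -$ by uniqueness of adjoints) is precisely what you have spelled out.
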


\begin{variant}\label{obs:tensorcross}
	Let $ E $ be a projectively generated \category and $ \pupperstar \colon \fromto{T}{S} $ be a left adjoint functor between presentable \categories that preserves finite products.
	Then we have a commutative square of \categories
	\begin{equation*}
		\begin{tikzcd}
			T \tensor E \arrow[d, "\pupperstar \tensor E"'] \arrow[r, "\sim"{yshift=-0.2em}] & \Funcross(\Ecptprojop,T)  \arrow[d, "\pupperstar \of -"] \\
			S \tensor E \arrow[r, "\sim"{yshift=-0.2em}] & \Funcross(\Ecptprojop,S) \period
		\end{tikzcd}
	\end{equation*}
\end{variant}

\Cref{obs:CGtensorFunlex,obs:tensorlex,obs:tensorcross} highlight that tensoring with a compactly or projectively generated \category has (unexpected) additional functoriality.


\subsection{Application: properties of left adjoints}\label{subsec:conservativity}

We now give two applications of \Cref{obs:tensorlex,obs:tensorcross} to the question of when tensoring with a presentable \category preserves the property of an adjoint being conservative or fully faithful.
First note that if $ \plowerstar $ is a conservative or fully faithful right adjoint, then \Cref{rec:HA.4.8.1.17} immediately implies that for any presentable \category $ E $, the functor $ \plowerstar \tensor E $ is conservative or fully faithful (see also \cite[Lemma 5.2.1]{arXiv:2007.13089v2}). 
The following example shows that the analogous claim for left adjoints is false:

\begin{example}[{\cite[Remark 5.2.2]{arXiv:2007.13089v2}}]
	Let $ \Spt $ denote the \category of spectra and $ \Spt_{\geq 0} \subset \Spt $ the full subcategory spanned by the connective spectra.
	The inclusion $ \Spt_{\geq 0} \subset \Spt $ admits a right adjoint given by taking the \textit{connective cover}.
	For any presentable $ 1 $-category $ E $, the tensor product $ \Spt \tensor E $ is the terminal category.
	In particular, $ \Spt \tensor \Set \equivalent \pt $.
	On the other hand, one can identify $ \Spt_{\geq 0} \tensor \Set $ with the category of abelian groups (as the heart of the standard \tstructure on $ \Spt $).
	Hence tensoring the inclusion $ \incto{\Spt_{\geq 0}}{\Spt} $ with $ \Set $ yields the zero functor $ \fromto{\Ab}{\pt} $; this functor is not conservative.
\end{example}

Note that the \categories $ \Spt_{\geq 0} $, $ \Spt $, and $ \Set $ are all compactly generated.
However, the inclusion $ \Spt_{\geq 0} \subset \Spt $ is not left exact, so we cannot apply \Cref{obs:tensorlex}.
This is the only obstruction to the preservation of conservativity or full faithfulness:

\begin{lemma}\label{lem:cgconservative}
	Let $ \{\pupperstar_i \colon \fromto{T}{S_i}\}_{i \in I} $ be a jointly conservative family of left adjoint functors between presentable \categories, and let $ E $ be a presentable \category.
	Assume that one of the following conditions holds:
	\begin{enumerate}[label=\stlabel{lem:cgconservative}, ref=\arabic*]
		\item\label{lem:cgconservative.1} The \category $ E $ is compactly assembled and the functors $ \{\pupperstar_i\}_{i \in I} $ are left exact.

		\item\label{lem:cgconservative.2} The \category $ E $ is projectively assembled and the functors $ \{\pupperstar_i\}_{i \in I} $ preserve finite products.
	\end{enumerate}
	Then the family of left adjoints $ \{\pupperstar_i \tensor E \colon \fromto{T \tensor E}{S_i \tensor E}\}_{i \in I} $ is jointly conservative.
\end{lemma}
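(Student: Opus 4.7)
The plan is to reduce joint conservativity of $\{\pupperstar_i \tensor E\}_{i \in I}$ to the pointwise joint conservativity of $\{\pupperstar_i\}_{i \in I}$ via the explicit models for the tensor product recorded in \Cref{obs:tensorlex,obs:tensorcross}.

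Under hypothesis \enumref{lem:cgconservative}{1}, \Cref{obs:tensorlex} identifies, for each $i$, the functor $\pupperstar_i \tensor E$ with the post-composition functor
\begin{equation*}
	\pupperstar_i \of - \colon \Funlex(\Ecptop,T) \longrightarrow \Funlex(\Ecptop,S_i) \period
\end{equation*}
The first step is to recall that equivalences in a functor \category are detected pointwise. Concretely, given a morphism $\alpha \colon \fromto{F}{G}$ in $\Funlex(\Ecptop,T)$, the morphism $\alpha$ is an equivalence if and only if $\alpha_X \colon \fromto{F(X)}{G(X)}$ is an equivalence in $T$ for every $X \in \Ecptop$.

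Next, I will unwind what joint conservativity of the tensored family says. Suppose that $(\pupperstar_i \of \alpha)$ is an equivalence in $\Funlex(\Ecptop,S_i)$ for every $i \in I$. Evaluating at an object $X \in \Ecptop$, this means that $\pupperstar_i(\alpha_X) \colon \fromto{\pupperstar_i F(X)}{\pupperstar_i G(X)}$ is an equivalence in $S_i$ for every $i$. By joint conservativity of the original family $\{\pupperstar_i\}_{i \in I}$, the morphism $\alpha_X$ is an equivalence in $T$. Since this holds for each $X$, the morphism $\alpha$ is an equivalence, which is what we wanted.

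For hypothesis \enumref{lem:cgconservative}{2} the argument is identical after replacing \Cref{obs:tensorlex} with \Cref{obs:tensorcross}, $\Funlex(\Ecptop,-)$ with $\Funcross(\Ecptprojop,-)$, and appealing again to the pointwise detection of equivalences of functors. I do not expect a serious obstacle: the entire content of the lemma is that, once one has the post-composition models of \Cref{obs:tensorlex,obs:tensorcross}, joint conservativity descends from target values to natural transformations by a pointwise check. The hypotheses on $\pupperstar_i$ (left exact, resp.\ finite-product preserving) are used exactly to guarantee that $\pupperstar_i \of -$ restricts to the appropriate subcategory of limit-preserving functors, so that the identifications of \Cref{obs:tensorlex,obs:tensorcross} apply.
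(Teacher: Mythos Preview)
Your proposal is correct and follows exactly the paper's approach: reduce via \Cref{obs:tensorlex,obs:tensorcross} to joint conservativity of the post-composition functors, which is immediate from pointwise detection of equivalences in functor \categories. The paper compresses your pointwise argument into the single phrase ``immediate from the assumption,'' but the content is identical.
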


\begin{proof}
	Since conservative functors are closed under retracts, by writing $ E $ as a retract in $ \PrL $ of a compactly or projectively assembled \category, it suffices to treat the cases where $ E $ is compactly or projectively assembled.
	By \Cref{obs:tensorlex}, in situation \enumref{lem:cgconservative}{1} it suffices to show that the collection of functors 
	\begin{equation*}
		\left\{\pupperstar_i \of - \colon \Funlex(\Ecptop,T) \to \Funlex(\Ecptop,S_i) \right\}_{i \in I}
	\end{equation*}
	is jointly conservative.
	Similarly, by \Cref{obs:tensorcross}, in situation \enumref{lem:cgconservative}{2} it suffices to show that the collection of functors 
	\begin{equation*}
		\left\{\pupperstar_i \of - \colon \Funcross(\Ecptprojop,T) \to \Funcross(\Ecptprojop,S_i) \right\}_{i \in I}
	\end{equation*}
	is jointly conservative.
	These assertions are immediate from the assumption that the functors $ \{\pupperstar_i\}_{i \in I} $ are jointly conservative.
\end{proof}

\begin{example}
	Given a jointly conservative family of points of \atopos, the family remains jointly conservative after tensoring with a compactly assembled \category.
\end{example}

Since fully faithful functors are closed under retracts, by the same style of argument we deduce:
 
\begin{lemma}\label{lem:cgff}
	Let $ \pupperstar \colon \incto{T}{S} $ be a fully faithful left adjoint functor between presentable \categories, and let $ E $ be a presentable \category.
	Assume that one of the following conditions holds:
	\begin{enumerate}[label=\stlabel{lem:cgff}, ref=\arabic*]
		\item\label{lem:cgff.1} The \category $ E $ is compactly assembled and $ \pupperstar $ is left exact.

		\item\label{lem:cgff.2} The \category $ E $ is projectively assembled and $ \pupperstar $ preserves finite products.
	\end{enumerate}
	Then the left adjoint $ \pupperstar \tensor E \colon \fromto{T \tensor E}{S \tensor E} $ fully faithful.
\end{lemma}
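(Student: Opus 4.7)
The strategy is entirely parallel to the proof of \Cref{lem:cgconservative}: I will reduce, via the explicit models for the tensor product provided by \Cref{obs:tensorlex,obs:tensorcross}, to the assertion that post-composition with a fully faithful functor is fully faithful on functor \categories.

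First I would observe that \Cref{obs:tensorlex} (in case \enumref{lem:cgff}{1}) and \Cref{obs:tensorcross} (in case \enumref{lem:cgff}{2}) identify the functor $\pupperstar \tensor E \colon T \tensor E \to S \tensor E$ with post-composition
\begin{equation*}
	\pupperstar \of - \colon \Funlex(\Ecptop,T) \longrightarrow \Funlex(\Ecptop,S)
\end{equation*}
in the first case, and with the analogous functor $\pupperstar \of - \colon \Funcross(\Ecptprojop,T) \to \Funcross(\Ecptprojop,S)$ in the second. Both of these subcategories are \textit{full} inside the ambient functor \categories $\Fun(\Ecptop,T)$ and $\Fun(\Ecptop,S)$. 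So it suffices to show that for any small \category $K$, post-composition $\pupperstar \of - \colon \Fun(K,T) \to \Fun(K,S)$ is fully faithful whenever $\pupperstar \colon T \hookrightarrow S$ is fully faithful.

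This last assertion is a standard fact about \categories, which I would invoke directly: given functors $G, H \colon K \to T$, the mapping space in $\Fun(K,T)$ can be described as the limit (end) over $K$ of the mapping spaces $\Map_T(G(k),H(k))$, and post-composition with $\pupperstar$ induces termwise the map $\Map_T(G(k),H(k)) \to \Map_S(\pupperstar G(k),\pupperstar H(k))$, which is an equivalence by full faithfulness of $\pupperstar$. Passing to the limit, the map $\Map_{\Fun(K,T)}(G,H) \to \Map_{\Fun(K,S)}(\pupperstar G, \pupperstar H)$ is an equivalence, so $\pupperstar \of -$ is fully faithful. Restricting to the full subcategories $\Funlex(\Ecptop,-)$ or $\Funcross(\Ecptprojop,-)$ preserves full faithfulness, completing the proof.

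There is no real obstacle here beyond unwinding the identifications; the content of the lemma is entirely concentrated in \Cref{obs:tensorlex,obs:tensorcross}, which is why the left exactness (respectively finite product preservation) hypothesis on $\pupperstar$ is essential --- it is precisely what guarantees that the post-composition functor lands in the correct subcategory and realizes $\pupperstar \tensor E$.
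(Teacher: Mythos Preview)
Your argument is correct. The paper omits the proof entirely (the statement is immediately followed by the next subsection), leaving it as an evident variant of the proof of \Cref{lem:cgconservative}; your write-up is exactly that variant and supplies the routine detail the paper suppresses.
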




\subsection{Application: commuting tensors past limits}\label{subsec:commuting_tensors_past_limits}

Given a sheaf of presentable \categories on \asite, one is often interested in knowing if the sheaf condition is still satisfied after tensoring with another presentable \category.
Since the sheaf condition asks that certain diagrams be limit diagrams, it is useful to have an answer to the more general question of when tensoring with a presentable \category preserves limits in $ \PrL $.

In this subsection, we provide a useful situation in which tensoring with a compactly assembled \category commutes past limits in $ \PrL $.
As motivation, recall that a \textit{stable} presentable \category $ E $ is compactly assembled if and only if $ E $ is dualizable in the \category $ \PrLst $ of stable presentable \categories and left adjoints \SAG{Proposition}{D.7.3.1}.
Since the symmetric monoidal structure on $ \PrLst $ is closed, if $ E $ is dualizable, then it is immediate that $ E \tensor (-) \colon \fromto{\PrLst}{\PrLst} $ preserves limits.
The following is the unstable refinement of this fact:

\begin{lemma}\label{lem:tensoring_with_cg_preserves_limits}
	Let $ E $ be a presentable \category and  $ C_{\bullet} \colon \fromto{I^{\op}}{\PrL} $ a diagram.
	Assume one of the following:
	\begin{enumerate}[label=\stlabel{lem:tensoring_with_cg_preserves_limits}, ref=\arabic*]
		\item\label{lem:tensoring_with_cg_preserves_limits.1} The \category $ E $ is compactly assembled and for each morphism $ f \colon \fromto{i}{j} $ in $ I $, the induced functor $ \fupperstar \colon \fromto{C_j}{C_i} $ is left exact.

		\item\label{lem:tensoring_with_cg_preserves_limits.2} The \category $ E $ is projectively assembled and for each morphism $ f \colon \fromto{i}{j} $ in $ I $, the induced functor $ \fupperstar \colon \fromto{C_j}{C_i} $ preserves finite products.
	\end{enumerate}
	Then the natural left adjoint functor
	\begin{equation*}
		E \tensor \lim_{i \in I^{\op}} C_i \to \lim_{i \in I^{\op}} (E \tensor C_i)
	\end{equation*}
	is an equivalence.
	Here the limits are formed in $ \PrL $.
\end{lemma}

\begin{proof}	
	Since the proof is essential the same in both cases, we only prove \enumref{lem:tensoring_with_cg_preserves_limits}{1}.
	Since equivalences are closed under retracts, it suffices to treat the case where $ E $ is compactly \textit{generated}.
	Since the forgetful functors $ \fromto{\PrL}{\Catinfty} $ and $ \fromto{\Catlex}{\Catinfty} $ preserve limits and the composite
	\begin{equation*}
		\begin{tikzcd}
			I^{\op} \arrow[r, "C_{\bullet}"] & \PrL \arrow[r] & \Catinfty
		\end{tikzcd}
	\end{equation*}
	factors through $ \Catlex $, it suffices to prove the claim for limits computed in $ \Catlex $.
	Applying \Cref{obs:CGtensorFunlex,obs:tensorlex}, we see that
	\begin{align*}
		E \tensor \lim_{i \in I^{\op}} C_i &\equivalent \Funlex(\Ecptop,\textstyle\lim_{i \in I^{\op}} C_i) \\ 
		&\equivalence \lim_{i \in I^{\op}} \Funlex(\Ecptop,C_i) \\ 
		&\equivalent \lim_{i \in I^{\op}} (E \tensor C_i) \period \qedhere
	\end{align*}
\end{proof}

\begin{warning}
	In the statement of \Cref{lem:tensoring_with_cg_preserves_limits}, the assumption that the transition functors be left exact cannot generally be removed.
	For example, let $ E = \Spt $ be the \category of spectra and consider the limit diagram
	\begin{equation*}
		\Spc \equivalence \lim
		\left(
		\begin{tikzcd}
			\cdots \arrow[r, "\trun_{\leq n+1}"] & \Spc_{\leq n+1} \arrow[r, "\trun_{\leq n}"] & \Spc_{\leq n} \arrow[r, "\trun_{\leq n-1}"] & \cdots
		\end{tikzcd}
		\right) \period
	\end{equation*}
	We have $ \Spt \tensor \Spc \equivalent \Spt $. 
	On the other hand, $ \Spt \tensor \Spc_{\leq n} $ is the terminal category.
	Hence the limit $ \lim_{n \in \NN^{\op}} \Spt \tensor \Spc_{\leq n} $ is also the terminal category.
\end{warning}


\subsection{Application: recollements}\label{subsec:recollements}

Let $ X $ be \acategory with finite limits.
Recall that fully faithful functors
\begin{equation*}
	\ilowerstar \colon \incto{Z}{X} \andeq \jlowerstar \colon \incto{U}{X}
\end{equation*}
are said to exhibit $ X $ as the \textit{recollement} of $ Z $ and $ U $ if:%
\footnote{Here we use the convention for the open and closed pieces of a recollement from the theory of constructible sheaves.}
\begin{enumerate}
	\item The functors $ \ilowerstar $ and $ \jlowerstar $ admit left exact left adjoints $ \iupperstar $ and $ \jupperstar $, respectively.

	\item The functor $ \jupperstar \ilowerstar \colon \fromto{Z}{U} $ is constant at the terminal object of $ U $.

	\item The functors $ \iupperstar \colon \fromto{X}{Z} $ and $ \jupperstar \colon \fromto{X}{U} $ are jointly conservative.
\end{enumerate}
See \cites[\HAsec{A.8}]{HA}{Stablerecoll:BarwickGlasman}.
Primarily due to the requirement that $ \iupperstar $ and $ \jupperstar $ are jointly conservative, given a recollement of presentable \categories, it is not obvious if it remains a recollement after tensoring with another presentable \category.
We finish this section by showing that tensoring with a compactly generated or stable \category preserves recollements (\Cref{cor:prescgrecollement,prop:presstabrecollement}).

The following is immediate from the definitions.

\begin{proposition}\label{prop:Klimrecollement}
	Let $ \Kcal $ be a collection of \categories, let $ I $ be a small \category with $ \Kcal $-shaped limits, and let $ \ilowerstar \colon \incto{Z}{X} $ and $ \jlowerstar \colon \incto{U}{X} $ be fully faithful right adjoints between \categories that admit $ \Kcal $-shaped limits.
	Assume that $ \ilowerstar $ and $ \jlowerstar $ exhibit $ X $ as the recollement of $ Z $ and $ U $ and that the left adjoints $ \iupperstar $ and $ \jupperstar $ preserve $ \Kcal $-shaped limits.
	Then the functors 
	\begin{equation*}
		\ilowerstar \of - \colon \incto{\FunKlim(I,Z)}{\FunKlim(I,X)} 
		\andeq
		\jlowerstar \of - \colon \incto{\FunKlim(I,U)}{\FunKlim(I,X)}
	\end{equation*}
	exhibit $ \FunKlim(I,X) $ as the recollement of $ \FunKlim(I,Z) $ and $ \FunKlim(I,U) $.
\end{proposition}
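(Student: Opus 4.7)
The plan is to verify the three defining conditions of a recollement for the pair $(\ilowerstar \of -, \jlowerstar \of -)$ by reducing each to its pointwise analogue in $X$, using the basic fact that limits in any functor category are computed pointwise so that the subcategories $\FunKlim(I,-)$ interact well with post-composition by $\Kcal$-limit-preserving functors.

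First I would check that the four candidate functors restrict to the correct subcategories. The right adjoints $\ilowerstar \of -$ and $\jlowerstar \of -$ land in $\FunKlim(I,X)$ because $\ilowerstar$ and $\jlowerstar$ preserve arbitrary limits, while $\iupperstar \of -$ and $\jupperstar \of -$ land in $\FunKlim(I,Z)$ and $\FunKlim(I,U)$ precisely because of the hypothesis that $\iupperstar$ and $\jupperstar$ preserve $\Kcal$-shaped limits. The adjunctions $(\iupperstar \of -) \dashv (\ilowerstar \of -)$ and $(\jupperstar \of -) \dashv (\jlowerstar \of -)$ then descend from the pointwise adjunctions $\iupperstar \dashv \ilowerstar$ and $\jupperstar \dashv \jlowerstar$ in $X$, with units and counits defined pointwise. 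Left exactness of $\iupperstar \of -$ and $\jupperstar \of -$ is immediate since finite limits in functor categories are pointwise and the pointwise functors are left exact; full faithfulness of $\ilowerstar \of -$ and $\jlowerstar \of -$ likewise reduces to pointwise full faithfulness.

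The remaining two conditions are equally formal. The composite $(\jupperstar \of -) \circ (\ilowerstar \of -)$ agrees with $(\jupperstar \ilowerstar) \of -$, and since $\jupperstar \ilowerstar$ is constant at the terminal object of $U$, this composite is constant at the terminal object of $\FunKlim(I,U)$. For joint conservativity, a morphism $\alpha$ in $\FunKlim(I,X)$ is an equivalence if and only if $\alpha_i$ is an equivalence in $X$ for every $i \in I$; if $\iupperstar \of \alpha$ and $\jupperstar \of \alpha$ are equivalences, then joint conservativity of $\iupperstar$ and $\jupperstar$ in $X$ upgrades these pointwise equivalences to an equivalence $\alpha$. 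There is no real obstacle in the argument; the only point requiring any care is keeping track of which functors restrict to $\FunKlim$ subcategories, and the hypothesis on $\iupperstar$ and $\jupperstar$ is tailor-made for exactly this bookkeeping.
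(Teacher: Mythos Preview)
Your proposal is correct and amounts to spelling out exactly what the paper means when it declares the result ``immediate from the definitions''; the paper gives no further argument. Your only addition is making explicit which hypotheses ensure that the post-composition functors restrict to the $\FunKlim$ subcategories, which is indeed the sole bookkeeping point.
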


\noindent The following consequence was previously recorded by Aizenbud and Carmeli \cite[Lemma 3.0.10]{arXiv:2105.12417v1}.

\begin{corollary}\label{cor:prescgrecollement}
	Let $ E $ be a presentable \category, and let $ \ilowerstar \colon \incto{Z}{X} $ and $ \jlowerstar \colon \incto{U}{X} $ be fully faithful right adjoints of presentable \categories that exhibit $ X $ as the recollement of $ Z $ and $ U $.
	If $ E $ is compactly generated, then $ \ilowerstar \tensor E $ and $ \jlowerstar \tensor E $ exhibit $ X \tensor E $ as the recollement of $ Z \tensor E $ and $ U \tensor E $.
\end{corollary}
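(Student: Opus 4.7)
The plan is to combine \Cref{prop:Klimrecollement} with the explicit model of the tensor product $(-) \tensor E$ as $\Funlex(\Ecptop,-)$ furnished by \Cref{obs:CGtensorFunlex}. Since $E$ is compactly generated, the tensor products $X \tensor E$, $Z \tensor E$, and $U \tensor E$ are each identified with $\Funlex(\Ecptop,-)$ applied to $X$, $Z$, and $U$, respectively, and under these identifications the right adjoints $\ilowerstar \tensor E$ and $\jlowerstar \tensor E$ correspond to post-composition with $\ilowerstar$ and $\jlowerstar$ (this is built into the definition of the tensor product on $\PrR$ recalled in \Cref{rec:HA.4.8.1.17}).

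To apply \Cref{prop:Klimrecollement}, I would take $\Kcal$ to be the collection of finite \categories and $I = \Ecptop$. The \category $I$ admits $\Kcal$-shaped limits because $\Ecpt \subset E$ is closed under finite colimits, and the left adjoints $\iupperstar$ and $\jupperstar$ preserve $\Kcal$-shaped limits by the very definition of a recollement (both are required to be left exact). \Cref{prop:Klimrecollement} then yields a recollement of $\Funlex(\Ecptop,X)$ by $\Funlex(\Ecptop,Z)$ and $\Funlex(\Ecptop,U)$ whose fully faithful right adjoints are post-composition with $\ilowerstar$ and $\jlowerstar$ and whose left-exact left adjoints are post-composition with $\iupperstar$ and $\jupperstar$.

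Finally, I would invoke \Cref{obs:tensorlex} to identify, under the equivalence $Y \tensor E \equivalent \Funlex(\Ecptop,Y)$, the post-composition functors by $\iupperstar$ and $\jupperstar$ with $\iupperstar \tensor E$ and $\jupperstar \tensor E$; this identification is precisely where left exactness of $\iupperstar$ and $\jupperstar$ gets used. Transporting along these equivalences then produces the asserted recollement on $X \tensor E$. The only step requiring any care is this last identification --- that the tensored left adjoints and the ``post-composition'' left adjoints are the same functor and not merely abstractly equivalent --- but this is exactly the content of \Cref{obs:tensorlex}, so there is no substantive obstacle beyond assembling the pieces already in place.
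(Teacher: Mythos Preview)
Your proposal is correct and matches the paper's proof essentially verbatim: the paper's argument is the one-line instruction to combine \Cref{obs:CGtensorFunlex,obs:tensorlex} with \Cref{prop:Klimrecollement} for $\Kcal$ the collection of finite \categories and $I = \Ecptop$, which is exactly what you have unpacked.
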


\begin{proof}
	Combine \Cref{obs:CGtensorFunlex,obs:tensorlex} with \Cref{prop:Klimrecollement} in the case that $ \Kcal $ is the collection of finite \categories and $ I = \Ecptop $.
\end{proof}

Now we use \Cref{cor:prescgrecollement} and properties of recollements of stable \categories to show that tensoring with a presentable stable \category preserves recollements.

\begin{notation}
	We write $ \Spt $ for the \category of spectra.
	Recall that $ \Spt $ is compactly generated, and for any presentable stable \category $ E $, there is a natural equivalence $ \Omega_{E}^{\infty} \colon \equivto{\Spt \tensor E}{E} $ \cite[\HAthm{Proposition}{1.4.2.21} \& \HAthm{Example}{4.8.1.23}]{HA}.
\end{notation}

\begin{observation}\label{obs:stabadjoint}
	Let $ E $ be a presentable stable \category and $ \plowerstar \colon \fromto{S}{T} $ a right adjoint between presentable \categories.
	Since $ E $ is stable, $ \plowerstar \tensor \Spt \tensor E \equivalent \plowerstar \tensor E $.
	Thus, if $ \plowerstar \tensor \Spt $ admits a right adjoint, then $ \plowerstar \tensor E $ admits a right adjoint.
	Similarly, if $ \pupperstar \tensor \Spt $ admits a left adjoint, then $ \pupperstar \tensor E $ admits a left adjoint.
\end{observation}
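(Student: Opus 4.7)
The plan is to combine three ingredients: associativity of the Lurie tensor product, the equivalence $\Spt \tensor E \equivalent E$ for presentable stable $E$ (coming from $\Omega_E^{\infty}$), and the fact that the tensor product functor $(-) \tensor E$ is a functor of $(\infty,2)$-categories on both $\PrR$ and $\PrL$ and therefore preserves adjunctions.

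First I would establish the identification $\plowerstar \tensor \Spt \tensor E \equivalent \plowerstar \tensor E$ asserted in the statement. By associativity of the Lurie tensor product on $\PrR$ there are canonical equivalences $S \tensor \Spt \tensor E \equivalent S \tensor (\Spt \tensor E)$ and $T \tensor \Spt \tensor E \equivalent T \tensor (\Spt \tensor E)$. Applying the equivalence $\Spt \tensor E \equivalent E$ in the second tensor factor then produces natural equivalences $S \tensor \Spt \tensor E \equivalence S \tensor E$ and $T \tensor \Spt \tensor E \equivalence T \tensor E$, and under these identifications the right adjoint $\plowerstar \tensor \Spt \tensor E$ corresponds to $\plowerstar \tensor E$.

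For the first assertion, assume $\plowerstar \tensor \Spt$ admits a right adjoint. Since $(-) \tensor E \colon \fromto{\PrR}{\PrR}$ is a functor of $(\infty,2)$-categories it preserves adjunctions (cf.\ \Cref{obs:adjointability}), so $(\plowerstar \tensor \Spt) \tensor E$ admits a right adjoint. Combining with the identification above, $\plowerstar \tensor E$ admits a right adjoint. The second assertion is entirely dual: the tensor product is also a functor of $(\infty,2)$-categories $(-) \tensor E \colon \fromto{\PrL}{\PrL}$, so if $\pupperstar \tensor \Spt$ admits a left adjoint then $(\pupperstar \tensor \Spt) \tensor E$ does, and the same identification forces $\pupperstar \tensor E$ to admit a left adjoint.

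The main thing to be careful about is verifying that the identification $(\plowerstar \tensor \Spt) \tensor E \equivalent \plowerstar \tensor E$ is natural enough in the $(\infty,2)$-categorical sense to transport the existence of an adjoint. But the associativity equivalence is part of the symmetric monoidal structure on $\PrR$ (and on $\PrL$) recalled in \cref{rec:HA.4.8.1.17}, and the equivalence $\Omega_E^{\infty} \colon \Spt \tensor E \equivalence E$ is an equivalence of presentable \categories; both identifications thus live natively inside $\PrR$ and $\PrL$, so no hand-tuning is needed to conclude.
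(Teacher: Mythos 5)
Your proposal is correct and follows essentially the same route as the paper's own (terse, inline) justification: use stability of $ E $ to identify $ \plowerstar \tensor \Spt \tensor E \equivalent \plowerstar \tensor E $ via $ \Omega_E^{\infty} \colon \equivto{\Spt \tensor E}{E} $, then invoke the fact that $ (-) \tensor E $ is a functor of $ (\infty,2) $-categories on $ \PrR $ (resp.\ $ \PrL $) and hence preserves the adjunction $ (\plowerstar \tensor \Spt) \dashv \guppersharp $ (resp.\ the left adjoint of $ \pupperstar \tensor \Spt $). Your extra care in checking that these adjunctions live internally to $ \PrR $ and $ \PrL $, so that the $ (\infty,2) $-functoriality genuinely applies, is exactly the right point to verify and is left implicit in the paper.
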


\begin{recollection}\label{rec:stabrecollement}
	Let $ \ilowerstar \colon \incto{Z}{X} $ and $ \jlowerstar \colon \incto{U}{X} $ be functors that exhibit $ X $ as the \textit{recollement} of $ Z $ and $ U $.
	If the \category $ Z $ has an initial object, then $ \jupperstar $ admits a fully faithful left adjoint $ \jlowershriek \colon \incto{U}{X} $ \HA{Corollary}{A.8.13}.
	If, moreover, $ X $ has a zero object, then $ \ilowerstar $ admits a right adjoint $ \iuppershriek \colon \fromto{X}{Z} $ defined by taking the fiber
	\begin{equation*}
		\iuppershriek \colonequals \fib(\id{X} \to \jlowerstar\jupperstar)
	\end{equation*}
	of the unit $ \fromto{\id{X}}{\jlowerstar\jupperstar} $ \HAa{Remark}{A.8.5}.

	If $ X $ is stable, then $ Z $ and $ U $ are also stable.
	Moreover, there is a canonical fiber sequence
	\begin{equation}\label{eq:recfibseq}
		\begin{tikzcd}
			\jlowershriek \jupperstar \arrow[r] & \id{X} \arrow[r] & \ilowerstar \iupperstar \comma
		\end{tikzcd}
	\end{equation}
	where the first morphism is the counit and the second is the unit \cites[\HAappthm{Proposition}{A.8.17}]{HA}[1.17]{arXiv:1909.03920}.
\end{recollection}

Note that given the adjunctions of stable \categories $ \jlowershriek \leftadjoint \jupperstar $ and $ \iupperstar \leftadjoint \ilowerstar $, the existence of a fiber sequence \eqref{eq:recfibseq} implies that $ \iupperstar $ and $ \jupperstar $ are jointly conservative.
To show that tensoring with a presentable stable \category $ E $ preserves recollements, we prove that such a fiber sequence always exists by embedding $ E $ in a compactly generated stable \category.
We first check that the relevant adjoints exist.

\begin{notation}
	Let $ T $ be a presentable \category and let $ \ilowerstar \colon \incto{Z}{X} $ and $ \jlowerstar \colon \incto{U}{X} $ be fully faithful right adjoints of presentable \categories that exhibit $ X $ as the recollement of $ Z $ and $ U $.
	We write $ \iupperstar_T \colonequals \iupperstar \tensor T $ and $ \jupperstar_T \colonequals \jupperstar \tensor T $, and write $ \ilowerstar^T \colonequals \ilowerstar \tensor T $ and $ \jlowerstar^T \colonequals \jlowerstar \tensor T $.
	If $ \ilowerstar^T $ admits a right adjoint, we denote this adjoint by $ \iuppershriek_T $; if $ \jupperstar_T $ admits a left adjoint, we denote this adjoint by $ \jlowershriek^T $.
\end{notation}

\begin{lemma}\label{lem:ilowestarladj}
	Let $ E $ be a presentable \category, and let $ \ilowerstar \colon \incto{Z}{X} $ and $ \jlowerstar \colon \incto{U}{X} $ be fully faithful right adjoints of presentable \categories that exhibit $ X $ as the recollement of $ Z $ and $ U $.
	If $ E $ is stable, then: 
	\begin{enumerate}[label=\stlabel{lem:ilowestarladj}, ref=\arabic*]
		\item\label{lem:ilowestarladj.1} The functor $ \ilowerstar^E $ admits a right adjoint $ \iuppershriek_E $.

		\item\label{lem:ilowestarladj.2} The functor $ \jupperstar_E $ admits a left adjoint $ \jlowershriek^E $.
		
		\item\label{lem:ilowestarladj.3} The composite $ \jupperstar_E \ilowerstar^E \colon \fromto{Z \tensor E}{U \tensor E} $ is constant at the terminal object of $ U \tensor E $.
	\end{enumerate}
\end{lemma}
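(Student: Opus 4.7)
The plan is to reduce to the case of a stable recollement by tensoring with $\Spt$ (which is compactly generated) and then transport the resulting adjunctions via the stability of $E$. I would first apply \Cref{cor:prescgrecollement} with $E = \Spt$ to exhibit $X \tensor \Spt$ as a recollement of the presentable stable \categories $Z \tensor \Spt$ and $U \tensor \Spt$. Inside this stable recollement, \Cref{rec:stabrecollement} supplies a right adjoint to $\ilowerstar \tensor \Spt$ and a fully faithful left adjoint to $\jupperstar \tensor \Spt$. Since $E$ is stable, \Cref{obs:stabadjoint} transports these into the right adjoint $\iuppershriek_E$ of $\ilowerstar^E$ and the left adjoint $\jlowershriek^E$ of $\jupperstar_E$, establishing \enumref{lem:ilowestarladj}{1} and \enumref{lem:ilowestarladj}{2}.

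For \enumref{lem:ilowestarladj}{3}, my plan is to invoke the $(\infty,2)$-functoriality of $(-) \tensor E \colon \fromto{\PrR}{\PrR}$ from \Cref{rec:HA.4.8.1.17}. The recollement axiom provides a natural equivalence $\jupperstar \ilowerstar \equivalent \ast_U$ of functors $\fromto{Z}{U}$, where $\ast_U$ is the functor constant at the terminal object of $U$. To apply $(\infty,2)$-functoriality, I need to check that this equivalence lives inside $\PrR$: the functor $\jupperstar \ilowerstar$ is a right adjoint with left adjoint $\iupperstar \jlowershriek$ (here $\jlowershriek$ exists in the presentable setting because $Z$ has an initial object, by \Cref{rec:stabrecollement}), while $\ast_U$ factors as $Z \to \pt \to U$ through the zero object $\pt$ of $\PrR$ via right adjoints. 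Applying $(-) \tensor E$ would then yield $(\jupperstar_E)(\ilowerstar^E) \equivalent \ast_U \tensor E$, and because $\pt \tensor E \equivalent \Funlim(E^{\op},\pt) \equivalent \pt$, the composite $\ast_U \tensor E$ would factor through $\pt$ and pick out the terminal object of $U \tensor E$.

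The main obstacle is precisely the verification that the natural equivalence $\jupperstar \ilowerstar \equivalent \ast_U$ can be interpreted inside $\PrR$, which is what allows the $(\infty,2)$-functoriality of $(-) \tensor E$ to be applied. The existence of $\jlowershriek$ in the presentable setting, guaranteed by \Cref{rec:stabrecollement}, is exactly what promotes $\jupperstar \ilowerstar$ to a right adjoint and enables this final step.
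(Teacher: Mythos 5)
Your proof is correct. For parts \enumref{lem:ilowestarladj}{1} and \enumref{lem:ilowestarladj}{2} it coincides with the paper's argument: tensor with $\Spt$, apply \Cref{cor:prescgrecollement} to obtain a recollement of presentable stable \categories, extract the adjoints $\iuppershriek_{\Spt}$ and $\jlowershriek^{\Spt}$ from \Cref{rec:stabrecollement}, and transport them along \Cref{obs:stabadjoint}. For part \enumref{lem:ilowestarladj}{3} you take a genuinely different route. The paper stays at the $\Spt$ level: it uses stability of $E$ to identify the row $Z \tensor E \to X \tensor E \to U \tensor E$ with $\Funlim(E^{\op},-)$ applied to $Z \tensor \Spt \to X \tensor \Spt \to U \tensor \Spt$ (legitimate because in the stable recollement $\jupperstar_{\Spt}$ admits the left adjoint $\jlowershriek^{\Spt}$, hence is a morphism of $\PrR$), and then quotes \Cref{cor:prescgrecollement} to see that post-composition with $\jupperstar_{\Spt}\ilowerstar^{\Spt}$ is constant at the terminal object. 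You instead work at the unstabilized level: since $Z$ is presentable it has an initial object, so $\jlowershriek$ already exists by \Cref{rec:stabrecollement} (\HA{Corollary}{A.8.13}), which exhibits $\jupperstar\ilowerstar$ and the constant functor as $1$-morphisms of $\PrR$ related by an invertible $2$-morphism (recall that $\PrR$ contains \emph{all} natural transformations), and the $(\infty,2)$-functor $(-) \tensor E \equivalent \Funlim(E^{\op},-)$ of \Cref{rec:HA.4.8.1.17} preserves this equivalence; since $\pt \tensor E \equivalent \pt$ and the right adjoint $\pt \to U \tensor E$ picks out the terminal object, the tensored functor is again constant at the terminal object. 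The one compatibility you use implicitly — that the $\PrR$-image of $\jupperstar$ (post-composition with $\jupperstar$) agrees with $\jupperstar_E$ as defined via the $\PrL$-tensoring — holds by uniqueness of left adjoints to $\jlowerstar \tensor E$, so your verification that the equivalence lives in $\PrR$ is exactly the right thing to check. Notably, your argument for \enumref{lem:ilowestarladj}{3} never uses stability of $E$, so it proves that conclusion for \emph{every} presentable $E$, a small gain in generality; what the paper's diagrammatic route buys is that all three parts are processed through the single stable recollement of $X \tensor \Spt$ already produced for the first two.
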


\begin{proof}
	By \Cref{cor:prescgrecollement}, $ \ilowerstar^{\Spt} $ and $ \jlowerstar^{\Spt} $ exhibit $ X \tensor \Spt $ as the recollement of $ Z \tensor \Spt $ and $ U \tensor \Spt $.
	Thus \enumref{lem:ilowestarladj}{1} and \enumref{lem:ilowestarladj}{2} follow from \Cref{obs:stabadjoint}.
	For \enumref{lem:ilowestarladj}{3}, note that since $ E $ is stable, we have a commuative diagram of right adjoints
	\begin{equation*}
		\begin{tikzcd}[column sep=5em]
			\Funlim(E^{\op},Z \tensor \Spt) \arrow[r, hooked, "\ilowerstar^{\Spt} \of -"] & \Funlim(E^{\op},X \tensor \Spt) \arrow[r, "\jupperstar_{\Spt} \of -"] & \Funlim(E^{\op},U \tensor \Spt)  \\ 
			Z \tensor \Spt \tensor E \arrow[u, "\wr"{xshift=0.2em}] \arrow[d, "\wr"'{xshift=0.2em}, "\Omega_{Z \tensor E}^{\infty}"] \arrow[r, hooked, "\ilowerstar^{\Spt} \tensor E" description] &
			X \tensor \Spt \tensor E \arrow[u, "\wr"{xshift=0.2em}] \arrow[d, "\wr"'{xshift=0.2em}, "\Omega_{X \tensor E}^{\infty}"] \arrow[r, "\jupperstar_{\Spt} \tensor E" description] &
			U \tensor \Spt \tensor E \arrow[u, "\wr"'{xshift=-0.2em}]  \arrow[d, "\wr"{xshift=-0.2em}, "\Omega_{U \tensor E}^{\infty}"'] \\
			Z \tensor E \arrow[r, hooked, "\ilowerstar^E"'] & X \tensor E \arrow[r, "\jupperstar_E"'] & U \tensor E  \period
		\end{tikzcd}
	\end{equation*}
	By \Cref{cor:prescgrecollement} the composite $ \jupperstar_{\Spt} \ilowerstar^{\Spt} $ is constant at the terminal object of $ U \tensor \Spt $, completing the proof.
\end{proof}

\begin{recollection}[{\HA{Proposition}{1.4.4.9}}]\label{rec:HA.1.4.4.9}
	\Acategory $ E $ is presentable and stable if and only if there exists a small \category $ E_0 $ such that $ E $ is equivalent to an accessible exact localization of $ \Fun(E_0,\Spt) $.
\end{recollection}

\begin{proposition}\label{prop:presstabrecollement}
	Let $ E $ be a presentable \category, and let $ \ilowerstar \colon \incto{Z}{X} $ and $ \jlowerstar \colon \incto{U}{X} $ be fully faithful right adjoints of presentable \categories that exhibit $ X $ as the recollement of $ Z $ and $ U $.
	If $ E $ is stable, then $ \ilowerstar \tensor E $ and $ \jlowerstar \tensor E $ exhibit $ X \tensor E $ as the recollement of $ Z \tensor E $ and $ U \tensor E $.
\end{proposition}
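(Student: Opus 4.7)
The plan is to verify the three defining conditions of a recollement for the pair $\ilowerstar^E$, $\jlowerstar^E$: that they are fully faithful right adjoints whose left adjoints $\iupperstar_E$, $\jupperstar_E$ are left exact; that the composite $\jupperstar_E \ilowerstar^E$ is constant at the terminal object; and that $\iupperstar_E$, $\jupperstar_E$ are jointly conservative.

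First I would dispose of the easy conditions. Full faithfulness of $\ilowerstar^E$ and $\jlowerstar^E$ is an instance of the general fact, recalled just before \Cref{lem:cgff}, that tensoring with $E$ preserves full faithfulness of right adjoints. Because $E$ is presentable stable, the tensor products $X \tensor E$, $Z \tensor E$, and $U \tensor E$ inherit stability (they each carry a natural $\Spt$-module structure via the second factor), and any colimit-preserving functor between stable presentable \categories is automatically exact; this gives left exactness of $\iupperstar_E$ and $\jupperstar_E$. The vanishing of $\jupperstar_E \ilowerstar^E$ is exactly \enumref{lem:ilowestarladj}{3}.

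The nontrivial step is joint conservativity. My plan is to produce a fiber sequence
\begin{equation*}
	\jlowershriek^E \jupperstar_E \to \id{X \tensor E} \to \ilowerstar^E \iupperstar_E
\end{equation*}
in $\Fun(X \tensor E, X \tensor E)$. Granted this, joint conservativity is immediate: if $\iupperstar_E(y) \equivalent 0$ and $\jupperstar_E(y) \equivalent 0$, then the right adjoint $\ilowerstar^E$ and the left adjoint $\jlowershriek^E$ (between stable \categories) preserve the zero object, so evaluating the fiber sequence at $y$ forces $y \equivalent 0$.

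To build the fiber sequence, I would first invoke \Cref{cor:prescgrecollement} with $\Spt$, which is compactly generated, to obtain a recollement of $X \tensor \Spt$ by $Z \tensor \Spt$ and $U \tensor \Spt$; since $X \tensor \Spt$ is stable, \Cref{rec:stabrecollement} provides a canonical fiber sequence of endofunctors of $X \tensor \Spt$. The main obstacle is transferring this fiber sequence along the equivalence $X \tensor \Spt \tensor E \equivalent X \tensor E$ that results from $\Spt \tensor E \equivalent E$. The cleanest route I see is to exploit the $\Spt$-linear structure of everything in sight: on presentable stable \categories the Lurie tensor product coincides with the relative tensor product over $\Spt$, and tensoring with $E$ over $\Spt$ defines a colimit-preserving functor on the stable \categories of $\Spt$-linear endofunctors, thus preserving fiber sequences. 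Its compatibility with composition identifies the image $(\jlowershriek^{\Spt} \jupperstar_{\Spt}) \tensor E$ with $\jlowershriek^E \jupperstar_E$, and analogously on the $\ilowerstar \iupperstar$ side, producing the desired fiber sequence in $X \tensor E$.
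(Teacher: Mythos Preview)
Your overall strategy coincides with the paper's: dispose of the easy recollement axioms via \Cref{lem:ilowestarladj} and full faithfulness of $(-) \tensor E$ on right adjoints, then deduce joint conservativity from a fiber sequence $\jlowershriek^{E}\jupperstar_{E} \to \id{} \to \ilowerstar^{E}\iupperstar_{E}$. The difference is in how this fiber sequence is manufactured.

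The paper does not tensor the $\Spt$-level fiber sequence up to $E$. Instead it uses \Cref{rec:HA.1.4.4.9} to embed $E$ as an exact localization of a compactly generated stable \category $E'$, applies \Cref{cor:prescgrecollement} to $E'$ to obtain the fiber sequence \eqref{eq:recfibseq} in $\Fun^{\mathrm L}(X \tensor E', X \tensor E')$, and then post-composes with the exact left adjoint $L_X \colon X \tensor E' \to X \tensor E$ and pre-composes with the fully faithful inclusion. The commutation identities from \Cref{obs:adjointcompat} and \Cref{lem:ilowestarladj} convert $L_X \jlowershriek^{E'}\jupperstar_{E'}$ into $\jlowershriek^{E}\jupperstar_{E} L_X$ (and similarly on the closed side), yielding the desired fiber sequence for $E$. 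The virtue of this route is that exactness is manifest: one only ever post-composes with an exact functor between stable \categories.

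Your route is also correct, but the sentence ``tensoring with $E$ over $\Spt$ defines a colimit-preserving functor on the stable \categories of $\Spt$-linear endofunctors'' is doing real work and deserves justification. One clean way: pass to right adjoints to identify $(-) \tensor E$ on $\Fun^{\mathrm L}(X \tensor \Spt, X \tensor \Spt)$ with the opposite of post-composition on $\Fun^{\mathrm R}$ under the model $(-) \tensor E \equivalent \Funlim(E^{\op},-)$; post-composition is visibly limit-preserving, hence the original functor on $\Fun^{\mathrm L}$ preserves colimits, and in particular cofiber sequences. Once you add this, your argument is complete and arguably a bit more direct than the paper's, since it avoids choosing an auxiliary $E'$.
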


\begin{proof}
	Since $ X \tensor E $, $ Z \tensor E $, and $ U \tensor E $ are stable, the left adjoints $ \iupperstar_E $ and $ \jupperstar_E $ are exact.
	In light of \Cref{lem:ilowestarladj}, the remaining point to check is that the functors $ \iupperstar_E $ and $ \jupperstar_E $ are jointly conservative.
	To do this, use \Cref{rec:HA.1.4.4.9} to choose a compactly generated stable \category $ E' $ and fully faithful right adjoint $ \incto{E}{E'} $ with exact left adjoint $ L \colon \fromto{E'}{E} $.
	For a presentable \category $ T $, write $ L_T \colonequals T \tensor L $.

	By \Cref{cor:prescgrecollement}, $ \ilowerstar^{E'} $ and $ \jlowerstar^{E'} $ exhibit $ X \tensor E' $ as the recollement of $ Z \tensor E' $ and $ U \tensor E' $.
	Since $ E' $ is stable, there is a fiber sequence
	\begin{equation}\label{eq:fibseqE'}
		\begin{tikzcd}
			\jlowershriek^{E'} \jupperstar_{E'} \arrow[r] & \id{X \tensor E'} \arrow[r] & \ilowerstar^{E'} \iupperstar_{E'} 
		\end{tikzcd}
	\end{equation}
	of left adjoint functors.
	Applying \Cref{obs:adjointcompat,lem:ilowestarladj}, we see that 
	\begin{align*}
		L_X \jlowershriek^{E'} \jupperstar_{E'} &\equivalent \jlowershriek^{E} L_U \jupperstar_{E'} \equivalent \jlowershriek^{E} \jupperstar_{E} L_X \\ 
		\shortintertext{and}
		L_X \ilowerstar^{E'} \iupperstar_{E'} &\equivalent \ilowerstar^{E} L_Z \iupperstar_{E'} \equivalent \ilowerstar^{E} \iupperstar_{E} L_X \period
	\end{align*}
	Thus the fiber sequence \eqref{eq:fibseqE'} localizes to a fiber sequence of left adjoints
	\begin{equation}\label{eq:fibseqE}
		\begin{tikzcd}
			\jlowershriek^{E} \jupperstar_{E} \arrow[r] & \id{X \tensor E} \arrow[r] & \ilowerstar^E \iupperstar_{E} \period
		\end{tikzcd}
	\end{equation}
	To see that $ \iupperstar_E $ and $ \jupperstar_E $ are jointly conservative, note that if $ F \in X \tensor E $ and $ \jupperstar_E(F) = 0 $ and $ \iupperstar_E(F) = 0 $, then the fiber sequence \eqref{eq:fibseqE} shows that $ F = 0 $.
\end{proof} 

\begin{corollary}\label{cor:presstabrecollement}
	Let $ E $ be a presentable \category, and let $ \ilowerstar \colon \incto{Z}{X} $ and $ \jlowerstar \colon \incto{U}{X} $ be fully faithful right adjoints of presentable \categories that exhibit $ X $ as the recollement of $ Z $ and $ U $.
	If $ X $ is stable, then $ \ilowerstar \tensor E $ and $ \jlowerstar \tensor E $ exhibit $ X \tensor E $ as the recollement of $ Z \tensor E $ and $ U \tensor E $.
\end{corollary}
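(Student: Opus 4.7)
The plan is to deduce this from \Cref{prop:presstabrecollement} by transferring the stability hypothesis from the ambient \category $X$ onto the tensor factor. The trick is to tensor by the stabilization $E' \colonequals \Spt \tensor E$, which is a presentable \emph{stable} \category, and then use the stability of $X, Z, U$ to identify $(-) \tensor E'$ with $(-) \tensor E$ on all three pieces of the recollement.

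First I would check that $Z$ and $U$ inherit stability from $X$. The left adjoints $\iupperstar$ and $\jupperstar$ are by definition of a recollement left exact, and as left adjoints they preserve all colimits; hence they are exact left adjoints exhibiting $Z$ and $U$ as exact accessible localizations of the stable \category $X$, and so $Z$ and $U$ are stable. Next, since $X$, $Z$, and $U$ are stable, the equivalence $\Omega_Y^{\infty} \colon Y \tensor \Spt \equivto Y$ together with associativity of the tensor product on $\PrR$ produces natural equivalences
\begin{equation*}
	Y \tensor E' \equivalent Y \tensor \Spt \tensor E \equivalent Y \tensor E
\end{equation*}
for $Y \in \{X,Z,U\}$. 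By \Cref{obs:adjointcompat} these equivalences are compatible with the functors $\ilowerstar \tensor (-)$ and $\jlowerstar \tensor (-)$, so they identify $\ilowerstar \tensor E'$ and $\jlowerstar \tensor E'$ with $\ilowerstar \tensor E$ and $\jlowerstar \tensor E$, respectively.

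Now I would invoke \Cref{prop:presstabrecollement} applied to the stable \category $E'$: the functors $\ilowerstar \tensor E'$ and $\jlowerstar \tensor E'$ exhibit $X \tensor E'$ as the recollement of $Z \tensor E'$ and $U \tensor E'$. Transporting along the three equivalences above yields the desired recollement of $X \tensor E$ by $Z \tensor E$ and $U \tensor E$.

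The only nontrivial step is the compatibility check in the second paragraph, and even this is essentially formal once one spells out that the equivalence $\Omega_Y^{\infty}$ is natural in $Y \in \PrR^{\textup{stab}}$; the rest of the argument is a short chain of symmetric monoidal identifications.
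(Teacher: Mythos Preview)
Your argument is correct and follows essentially the same route as the paper's proof: reduce to \Cref{prop:presstabrecollement} by replacing $E$ with the stable \category $\Spt \tensor E$ and using the natural equivalences $\Omega_Y^{\infty} \colon Y \tensor \Spt \equivto Y$ for $Y \in \{X,Z,U\}$ (all stable by \Cref{rec:stabrecollement}). One small quibble: the compatibility of these equivalences with $\ilowerstar$ and $\jlowerstar$ is not really an instance of \Cref{obs:adjointcompat} but rather the naturality of $\Omega_Y^{\infty}$ in $Y$ that you correctly invoke at the end---that is exactly the commutative diagram the paper draws.
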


\begin{proof}
	Since $ X $ is stable, both $ Z $ and $ U $ are stable (\Cref{rec:stabrecollement}) and we have a commutative diagram
	\begin{equation*}
		\begin{tikzcd}[column sep=5em]
			Z \tensor \Spt \arrow[d, "\wr"'{xshift=0.2em}, "\Omega_Z^{\infty}"] \arrow[r, hooked, "\ilowerstar \tensor \Sp"] & X \tensor \Spt \arrow[d, "\wr"'{xshift=0.2em}, "\Omega_X^{\infty}"] & U \tensor \Spt  \arrow[d, "\wr"{xshift=-0.2em}, "\Omega_U^{\infty}"'] \arrow[l, hooked', "\jlowerstar \tensor \Sp"'] \\
			Z \arrow[r, hooked, "\ilowerstar"'] & X & U \arrow[l, hooked', "\jlowerstar"] \period
		\end{tikzcd}
	\end{equation*}
	Hence the claim is equivalent to showing that $ \ilowerstar \tensor (\Spt \tensor E) $ and $ \jlowerstar \tensor (\Spt \tensor E) $ exhibit $ X \tensor (\Spt \tensor E) $ as the recollement of $ Z \tensor (\Spt \tensor E) $ and $ U \tensor (\Spt \tensor E) $.
	Since $ \Spt \tensor E $ is stable, \Cref{prop:presstabrecollement} completes the proof.
\end{proof}

\begin{remark}
	Contemporaneously with the first version of this work, Carmeli, Schlank, and Yanovski \cite[Proposition 5.2.3]{arXiv:2007.13089v2} provided a different proof of \Cref{cor:presstabrecollement}.
\end{remark}

\begin{remark}
	If $ \ilowerstar \colon \incto{Z}{X} $ and $ \jlowerstar \colon \incto{U}{X} $ form a recollement of presentable \categories, and $ X $ is \atopos, then $ Z $ and $ U $ are \topoi \HAa{Proposition}{A.8.15}.
	Moreover, if $ E $ is another \topos, then $ \ilowerstar \tensor E $ and $ \jlowerstar \tensor E $ exhibit $ X \tensor E $ as the recollement of $ Z \tensor E $ and $ U \tensor E $ \cites[\HTTthm{Remark}{6.3.5.8} \& \HTTthm{Proposition}{7.3.2.12}]{HTT}[\HAthm{Example}{4.8.1.19} \& \HAappthm{Proposition}{A.8.15}]{HA}.
	In light of this and \Cref{cor:prescgrecollement,prop:presstabrecollement,cor:presstabrecollement}, in basically all situations one naturally runs into, the tensor product preserves recollements.
\end{remark}


\section{Adjointability results}\label{sec:results}

In this section, we use the explicit descriptions of the tensor product with a compactly generated \category from \cref{subsec:tensorcg} to explain which operations on an oriented square \eqref{sq:generalsquare} of presentable \categories preserve left adjointability.
In particular, we prove \Cref{thm:main}.

In \cref{sec:functors}, we make a general observation (\Cref{prop:Klimadj}) that immediately takes care of the compactly generated case of \Cref{thm:main}.
\Cref{prop:Klimadj} also has some other useful consequences; see \Cref{ex:Lawvere,cor:CGprojadjointability}.
In \cref{sec:filteredcolim}, we take care of the stable case of \Cref{thm:main}.
In \cref{subsec:properbasechange}, we explain consequences of \Cref{thm:main} and Lurie's Nonabelian Proper Basechange Theorem. 


\subsection{Adjointability \& \texorpdfstring{$\infty$}{∞}-categories of functors}\label{sec:functors}

To prove the compactly generated case of \Cref{thm:main}, we appeal to the improved functoriality of the tensor product explained in \Cref{obs:CGtensorFunlex}.

\begin{proposition}\label{prop:Klimadj}
	Let $ \Kcal $ be a collection of \categories and let $ I $ be a small \category.
	Assume that:
	\begin{enumerate}[label=\stlabel{prop:Klimadj}, ref=\arabic*]
		\item The \category $ I $ admits $ \Kcal $-shaped limits and \eqref{sq:generalsquare} is an oriented square in $ \CatKlim $.

		\item The left adjoints $ \fupperstar \colon \fromto{D}{B} $ and $ \fbarupperstar \colon \fromto{C}{A} $ preserve $ \Kcal $-shaped limits.
		
		\item The oriented square \eqref{sq:generalsquare} is left adjointable.
	\end{enumerate}
	Then the induced oriented square
	\begin{equation}\tag*{$ \FunKlim(I,(\textup{\ref{sq:generalsquare}})) $}\label{sq:FunKlimgeneralsquare}
		\begin{tikzcd}[column sep=3.5em, row sep=2em]
			\FunKlim(I,A) \arrow[r, "\fbarlowerstar \of -"] \arrow[d, "\gbarlowerstar \of -"'] & \FunKlim(I,C) \arrow[d, "\glowerstar \of -"] \arrow[dl, phantom, "\scriptstyle \sigma \of -" below right, "\Longleftarrow" sloped] \\ 
			\FunKlim(I,B) \arrow[r, "\flowerstar \of -"'] & \FunKlim(I,D) 
		\end{tikzcd}
	\end{equation}
	is left adjointable.
\end{proposition}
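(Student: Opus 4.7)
The plan is to exhibit $\FunKlim(I,-)$ as a functor of $(\infty,2)$-categories from $\CatKlim$ to $\Cat_\infty$ and then invoke \Cref{obs:adjointability}, which records that any functor of $(\infty,2)$-categories preserves left adjointable oriented squares.

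First I would verify that $\FunKlim(I,-) \colon \CatKlim \to \Cat_\infty$ is indeed a functor of $(\infty,2)$-categories. Since $\Fun(I,-) \colon \Cat_\infty \to \Cat_\infty$ already is one (as noted in the Terminology and notations subsection), it suffices to check that, when $I$ admits $\Kcal$-shaped limits, the full subcategory $\FunKlim(I,X) \subseteq \Fun(I,X)$ is carried into $\FunKlim(I,Y)$ by post-composition with any $\Kcal$-limit-preserving functor $X \to Y$. This is routine, since limits in functor \categories are computed pointwise. Taking the obvious whiskering action on natural transformations then yields the desired subfunctor of $\Fun(I,-)$ restricted to $\CatKlim$.

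Next I would observe that the hypotheses promote the adjunctions $\fupperstar \leftadjoint \flowerstar$ and $\fbarupperstar \leftadjoint \fbarlowerstar$ to adjunctions internal to the $(\infty,2)$-category $\CatKlim$. Indeed, both the left and right adjoints are $1$-morphisms in $\CatKlim$ (the right adjoints by the first hypothesis, the left adjoints by the second), and the units and counits qualify as $2$-morphisms of $\CatKlim$ since this $(\infty,2)$-category has \emph{all} natural transformations as $2$-morphisms. Hence the oriented square \eqref{sq:generalsquare} lives entirely in $\CatKlim$ and, by the third hypothesis, is left adjointable there.

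Applying the functor of $(\infty,2)$-categories $\FunKlim(I,-)$ to this left adjointable square then yields the oriented square \ref*{sq:FunKlimgeneralsquare}, which remains left adjointable by \Cref{obs:adjointability}. The only potentially delicate point is the first step --- checking enough $(\infty,2)$-categorical coherence to identify $\FunKlim(I,-)$ as a bona fide functor of $(\infty,2)$-categories rather than a mere assignment on objects and $1$-morphisms --- but this is standard given the paper's convention that all $(\infty,2)$-functors in sight are subfunctors of $(C,D) \mapsto \Fun(C,D)$.
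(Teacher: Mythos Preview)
Your proposal is correct and follows essentially the same approach as the paper: observe that the hypotheses make $\fupperstar \leftadjoint \flowerstar$ and $\fbarupperstar \leftadjoint \fbarlowerstar$ adjunctions internal to $\CatKlim$, and then apply the $(\infty,2)$-functor $\FunKlim(I,-)$, invoking \Cref{obs:adjointability}. The paper's proof is the same argument in two sentences; your version just spells out more explicitly why $\FunKlim(I,-)$ is a subfunctor of $\Fun(I,-)$ on $\CatKlim$ and why the adjunctions live in $\CatKlim$.
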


\begin{proof}
	By the assumptions, the functors $ \flowerstar $ and $ \fbarlowerstar $ admit left adjoints in the $ (\infty,2) $-category $ \CatKlim $.
	The fact that functors of $ (\infty,2) $-categories preserves left adjointable squares completes the proof.
\end{proof}


\begin{corollary}\label{cor:CGadjointability}
	Let $ E $ be a compactly generated \category.
	Assume that:
	\begin{enumerate}[label=\stlabel{cor:CGadjointability}, ref=\arabic*]
		\item\label{cor:CGadjointability.1} \eqref{sq:generalsquare} is an oriented square in $ \PrR $.

		\item\label{cor:CGadjointability.2} The left adjoints $ \fupperstar \colon \fromto{D}{B} $ and $ \fbarupperstar \colon \fromto{C}{A} $ are left exact.
		
		\item\label{cor:CGadjointability.3} The oriented square \eqref{sq:generalsquare} is left adjointable.
	\end{enumerate}
	Then the oriented square $ \textup{\eqref{sq:generalsquare}} \tensor E $ is left adjointable.
\end{corollary}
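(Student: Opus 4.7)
The plan is to deduce this from \Cref{prop:Klimadj} using the explicit model of $(-) \tensor E$ provided by \Cref{obs:CGtensorFunlex,obs:tensorlex}. Concretely, I will take $\Kcal$ to be the collection of finite \categories and $I = \Ecptop$, so that \Cref{prop:Klimadj} produces a left adjointable square of the form $\Funlex(\Ecptop, \eqref{sq:generalsquare})$, and then identify this square with $\eqref{sq:generalsquare} \tensor E$.

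First I would verify the hypotheses of \Cref{prop:Klimadj}. Since $E$ is compactly generated, $\Ecpt$ is closed under finite colimits in $E$ (see \Cref{subsec:CGreminder}), so $I = \Ecptop$ admits finite limits. The square \eqref{sq:generalsquare} is a square of presentable \categories and right adjoints, hence in particular a square of \categories with finite limits and finite-limit-preserving functors; moreover assumption \enumref{cor:CGadjointability}{2} says the left adjoints $\fupperstar$ and $\fbarupperstar$ are left exact, which is exactly the condition that they preserve $\Kcal$-shaped limits. Combined with the left adjointability hypothesis \enumref{cor:CGadjointability}{3}, \Cref{prop:Klimadj} applies and yields that $\Funlex(\Ecptop, \eqref{sq:generalsquare})$ is left adjointable in $\Catlex$, hence a fortiori in $\Cat_{\infty}$.

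The remaining step is to match this conclusion with the claim about $\eqref{sq:generalsquare} \tensor E$. By \Cref{obs:CGtensorFunlex}\enumref{obs:CGtensorFunlex}{1}, the vertical right adjoints of $\eqref{sq:generalsquare} \tensor E$ are identified with post-composition by $\flowerstar, \fbarlowerstar, \glowerstar, \gbarlowerstar$ on $\Funlex(\Ecptop, -)$, and the natural transformation $\sigma \tensor E$ is identified with post-composition by $\sigma$. Then \Cref{obs:tensorlex} identifies the left adjoints $\fupperstar \tensor E$ and $\fbarupperstar \tensor E$ with post-composition by $\fupperstar$ and $\fbarupperstar$, respectively. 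Hence the two oriented squares $\eqref{sq:generalsquare} \tensor E$ and $\Funlex(\Ecptop, \eqref{sq:generalsquare})$ coincide, and the exchange transformations are computed by the same formula; so left adjointability of one transfers to the other.

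The only real subtlety is the identification of the left adjoint after tensoring as post-composition with $\fupperstar$, which is precisely where the left exactness hypothesis \enumref{cor:CGadjointability}{2} is used — it is what guarantees that post-composition with $\fupperstar$ sends $\Funlex(\Ecptop, D)$ into $\Funlex(\Ecptop, B)$ and thereby realizes $\fupperstar \tensor E$. Once that identification is in place, the result is formal from the $(\infty,2)$-functoriality of $\Funlex(\Ecptop, -)$ applied in \Cref{prop:Klimadj}.
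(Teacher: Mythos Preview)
Your proposal is correct and follows essentially the same approach as the paper: the paper's proof is a one-line instruction to combine \Cref{obs:CGtensorFunlex,obs:tensorlex} with \Cref{prop:Klimadj} in the case $\Kcal = \{\text{finite \categories}\}$ and $I = \Ecptop$, which is exactly what you carry out in detail.
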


\begin{proof}
	Combine \Cref{obs:CGtensorFunlex,obs:tensorlex} with \Cref{prop:Klimadj} in the case that $ \Kcal $ is the collection of finite \categories and $ I = \Ecptop $.
\end{proof}

\begin{warning}
	Note that the \category $ \Spc_{\leq 1} $ of $ 1 $-truncated spaces is compactly generated.
	Hence \Cref{ex:trunSpcsquare} shows that the assumption \enumref{cor:CGadjointability}{2} cannot be removed.
\end{warning}

\begin{corollary}\label{cor:productadjointability}
	Let $ I $ be a small \category with finite products.
	Assume that:
	\begin{enumerate}[label=\stlabel{cor:productadjointability}, ref=\arabic*]
		\item \eqref{sq:generalsquare} is an oriented square in $ \Catfp $.

		\item The left adjoints $ \fupperstar \colon \fromto{D}{B} $ and $ \fbarupperstar \colon \fromto{C}{A} $ preserve finite products.
		
		\item The oriented square \eqref{sq:generalsquare} is left adjointable.
	\end{enumerate}
	Then the oriented square $ \Funcross(I,\textup{\eqref{sq:generalsquare}}) $ is left adjointable.
\end{corollary}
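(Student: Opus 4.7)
My plan is to reduce the statement directly to \Cref{prop:Klimadj} by taking the collection $\Kcal$ of shape diagrams to be the class of finite (discrete) categories, i.e., finite sets. With this choice, \Cref{def:Klim}\enumref{def:Klim}{6} gives verbatim identifications $\CatKlim = \Catfp$ and $\FunKlim(J,-) = \Funcross(J,-)$ for every small category $J$; in particular, ``preserves $\Kcal$-shaped limits'' becomes ``preserves finite products.'' So the statement of \Cref{cor:productadjointability} is obtained from the statement of \Cref{prop:Klimadj} by rewriting notation.

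The verification of hypotheses is then immediate. First, the small category $I$ admits $\Kcal$-shaped limits because it is assumed to have finite products, and assumption~(1) of \Cref{cor:productadjointability} is exactly the assertion that \eqref{sq:generalsquare} is an oriented square in $\CatKlim$. Second, assumption~(2) is precisely the statement that the left adjoints $\fupperstar$ and $\fbarupperstar$ preserve $\Kcal$-shaped limits. Third, assumption~(3) is the same left adjointability hypothesis appearing in \Cref{prop:Klimadj}. The conclusion of \Cref{prop:Klimadj} is that the induced square $\FunKlim(I,\eqref{sq:generalsquare})$ is left adjointable, which after rewriting is the desired statement that $\Funcross(I,\eqref{sq:generalsquare})$ is left adjointable.

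There is really no obstacle in the argument: once \Cref{prop:Klimadj} has been set up with the parameter $\Kcal$, the present corollary is a pure specialization, strictly parallel to how \Cref{cor:CGadjointability} specializes \Cref{prop:Klimadj} with $\Kcal$ the collection of all finite categories (via \Cref{obs:CGtensorFunlex,obs:tensorlex}). The only task is to state the specialization for $\Kcal = \{\text{finite sets}\}$ and invoke \Cref{prop:Klimadj}.
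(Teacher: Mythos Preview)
Your proof is correct and takes essentially the same approach as the paper: both specialize \Cref{prop:Klimadj} to the case where $\Kcal$ is the collection of finite sets, whereupon the hypotheses and conclusion of \Cref{cor:productadjointability} become those of \Cref{prop:Klimadj} verbatim via \Cref{def:Klim}. The paper's proof additionally cites \Cref{obs:CGtensorFunlex,obs:tensorcross}, but those references are not actually needed for this statement as phrased (they become relevant only for the downstream \Cref{cor:CGprojadjointability}, where one must identify $(-)\tensor E$ with $\Funcross(\Ecptprojop,-)$); your streamlined invocation of \Cref{prop:Klimadj} alone is entirely sufficient here.
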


\begin{proof}
	Combine \Cref{obs:CGtensorFunlex,obs:tensorcross} with \Cref{prop:Klimadj} in the case that $ \Kcal $ is the collection of finite sets.
\end{proof}

\Cref{cor:productadjointability} has some nice consequences:

\begin{example}[(algebras over Lawvere theories)]\label{ex:Lawvere}
	Let $ L $ be a Lawvere theory in the \categorical sense \cites{MR3848405}{MR4083788}[Chapter 3]{arXiv:1011.3243}.
	In the setting of \Cref{cor:productadjointability}, the induced square of \categories of $ L $-algebras
	\begin{equation*}
		\begin{tikzcd}
			\Alg_L(A) \arrow[r] \arrow[d] & \Alg_L(C) \arrow[d] \arrow[dl, phantom, "\Longleftarrow" sloped] \\ 
			\Alg_L(B) \arrow[r] & \Alg_L(D) \comma
		\end{tikzcd}
	\end{equation*}
	is left adjointable. 
	In particular, letting $ L = \Span(\Setfin) $ be the $ (2,1) $-category of spans of finite sets \cite[\S3]{MR3558219}, we see that the formation of commutative monoid objects preserves left adjointability of oriented squares in which all functors preserve finite products.
\end{example}

\noindent Another special case of \Cref{cor:productadjointability} is given by tensoring with a projectively generated \category (here take $ I = \Ecptprojop $):

\begin{corollary}\label{cor:CGprojadjointability}
	Let $ E $ be a projectively generated \category.
	Assume that:
	\begin{enumerate}[label=\stlabel{cor:CGprojadjointability}, ref=\arabic*]
		\item \eqref{sq:generalsquare} is an oriented square in $ \PrR $.

		\item The left adjoints $ \fupperstar \colon \fromto{D}{B} $ and $ \fbarupperstar \colon \fromto{C}{A} $ preserve finite products.
		
		\item The oriented square \eqref{sq:generalsquare} is left adjointable.
	\end{enumerate}
	Then the oriented square $ \textup{\eqref{sq:generalsquare}} \tensor E $ is left adjointable.
\end{corollary}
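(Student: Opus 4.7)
The plan is to deduce this as the $I = \Ecptprojop$ instance of \Cref{cor:productadjointability}. The first step is to check that $I$ is a small \category admitting finite products: by \cref{rec:compactproj}, the subcategory $\Ecptproj \subset E$ is closed under finite coproducts, so $\Ecptprojop$ admits finite products. The oriented square \eqref{sq:generalsquare} automatically lies in $\Catfp$ because presentable \categories admit finite products and the right adjoints in $\PrR$ preserve them. Together with the standing hypothesis that $\fupperstar$ and $\fbarupperstar$ preserve finite products and that \eqref{sq:generalsquare} is left adjointable, this places us in the setting of \Cref{cor:productadjointability}.

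Applying \Cref{cor:productadjointability} yields that the induced square $\Funcross(\Ecptprojop, \textup{\eqref{sq:generalsquare}})$ is left adjointable. The remaining step is to transport this result across the equivalence $(-) \tensor E \simeq \Funcross(\Ecptprojop, -)$ supplied by \Cref{obs:CGtensorFunlex}. Since that observation packages the equivalence as a commuting square of functors of $(\infty,2)$-categories, it sends the right adjoints appearing in $\textup{\eqref{sq:generalsquare}} \tensor E$ to the post-composition functors appearing in $\Funcross(\Ecptprojop, \textup{\eqml{sq:generalsquare}})$. \Cref{obs:tensorcross} provides the compatible identification on the left-adjoint side, so the $2$-morphisms and the associated exchange transformations match up as well, and left adjointability transfers from one side to the other.

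I do not expect a real obstacle: the conceptual content has already been absorbed in \cref{subsec:tensorcg}, where the alternative description of the tensor product is upgraded to an equivalence of functors of $(\infty,2)$-categories, and in \Cref{cor:productadjointability}, where adjointability is verified at the level of functor categories. The statement at hand simply assembles those two inputs in the special case $I = \Ecptprojop$.
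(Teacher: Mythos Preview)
Your proposal is correct and follows the paper's own route: the paper simply remarks that \Cref{cor:CGprojadjointability} is the special case $I = \Ecptprojop$ of \Cref{cor:productadjointability}, and you have spelled out exactly the verifications (smallness and finite products in $\Ecptprojop$, the square lying in $\Catfp$, and the transport along \Cref{obs:CGtensorFunlex,obs:tensorcross}) that make this specialization go through. One trivial remark: your text contains the stray macro \verb|\eqml|, which should be \verb|\eqref|.
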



\subsection{Adjointability \& preservation of filtered colimits}\label{sec:filteredcolim}

In many \categories that arise in algebra and sheaf theory, filtered colimits commute with finite limits.
For example, filtered colimits commute with finite limits in: 
compactly generated \categories, Grothendieck abelian categories \cites[\stackstag{079A}]{stacksproject}[\HAthm{Definition}{1.3.5.1}]{HA}{MR102537}, Grothendieck prestable \categories \SAG{Definition}{C.1.4.2}, stable \categories, and $ n $-topoi for each $ 0 \leq  n \leq  \infty $.
The goal of this subsection is to show that in these situations, if the vertical right adjoints $ \glowerstar $ and $ \gbarlowerstar $ preserve filtered colimits and \eqref{sq:generalsquare} becomes left adjointable after tensoring with the \category of spectra, then \eqref{sq:generalsquare} becomes left adjointable after tensoring with \textit{any} stable presentable \category.
Combined with \Cref{cor:CGadjointability}, this allows us to generalize the Proper Basechange Theorem in topology to sheaves with values in presentable \categories which are compactly generated or stable (\Cref{subex:propertopology}).

In order to state this result, we recall some terminology.

\begin{recollection}
	Let $ S $ be \acategory with finite limits and filtered colimits.
	We say that \defn{filtered colimits in $ S $ are left exact} if for each small filtered \category $ I $, the functor
	\begin{equation*}
		\textstyle \colim_I \colon \fromto{\Fun(I,S)}{S}
	\end{equation*}
	is left exact.
\end{recollection}

\begin{proposition}\label{cor:stabtensorfilteredcolim}
	Let $ E $ be a stable presentable \category.
	Assume that:
	\begin{enumerate}[label=\stlabel{cor:stabtensorfilteredcolim}, ref=\arabic*]
		\item\label{cor:stabtensorfilteredcolim.1} The \categories $ A $, $ B $, $ C $, and $ D $ in the oriented square \eqref{sq:generalsquare} are presentable and filtered colimits are left exact in each \category.
		Moreover, all functors in \eqref{sq:generalsquare} are right adjoints.

		\item\label{cor:stabtensorfilteredcolim.2} The right adjoints $ \glowerstar $ and $ \gbarlowerstar $ preserve filtered colimits.
		
		\item\label{cor:stabtensorfilteredcolim.3} The oriented square $ \textup{\eqref{sq:generalsquare}} \tensor \Spt $ is left adjointable.
	\end{enumerate}
	Then the functors $ \glowerstar \tensor E $ and $ \gbarlowerstar \tensor E $ are left adjoints and the oriented square $ \textup{\eqref{sq:generalsquare}} \tensor E $ is left adjointable.
\end{proposition}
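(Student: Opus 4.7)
The plan is to apply \Cref{prop:extrartadj} to the oriented square $\textup{\eqref{sq:generalsquare}} \tensor \Spt$ with the presentable \category $E$. Since $E$ is stable, \Cref{obs:stabadjoint} identifies $(\glowerstar \tensor \Spt) \tensor E$ with $\glowerstar \tensor E$ and $(\gbarlowerstar \tensor \Spt) \tensor E$ with $\gbarlowerstar \tensor E$, and analogously $(\textup{\eqref{sq:generalsquare}} \tensor \Spt) \tensor E$ with $\textup{\eqref{sq:generalsquare}} \tensor E$. So the proposition will follow once the hypotheses of \Cref{prop:extrartadj} are verified for $\textup{\eqref{sq:generalsquare}} \tensor \Spt$.

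Two of these hypotheses come for free: $(-) \tensor \Spt$ is a functor of $(\infty,2)$-categories $\PrR \to \PrR$, so $\textup{\eqref{sq:generalsquare}} \tensor \Spt$ is an oriented square in $\PrR$, and its left adjointability is exactly hypothesis \enumref{cor:stabtensorfilteredcolim}{3}. The substantive step is to show that the vertical morphisms $\glowerstar \tensor \Spt$ and $\gbarlowerstar \tensor \Spt$ admit right adjoints, equivalently, that they preserve all colimits.

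This will be the main obstacle. To verify it, I would use the identification $C \tensor \Spt \equivalent \Funlex(\Spt^{\cpt,\op}, C)$ from \Cref{obs:CGtensorFunlex}, together with the analogous identification for $D$; under these, the functor $\glowerstar \tensor \Spt$ is given by post-composition with $\glowerstar$ (cf.\ \Cref{rec:HA.4.8.1.17}). Hypothesis \enumref{cor:stabtensorfilteredcolim}{1} that filtered colimits are left exact in $C$ and $D$ ensures that filtered colimits in $\Funlex(\Spt^{\cpt,\op}, C)$ and $\Funlex(\Spt^{\cpt,\op}, D)$ are computed pointwise; post-composition with $\glowerstar$ therefore preserves filtered colimits by hypothesis \enumref{cor:stabtensorfilteredcolim}{2}. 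Since $C \tensor \Spt$ and $D \tensor \Spt$ are stable and $\glowerstar \tensor \Spt$ is a right adjoint (hence exact), preservation of filtered colimits upgrades to preservation of all colimits, so $\glowerstar \tensor \Spt$ is a left adjoint; the same reasoning handles $\gbarlowerstar \tensor \Spt$. Invoking \Cref{prop:extrartadj} then completes the proof.
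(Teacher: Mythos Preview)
Your proof is correct and follows the same strategy as the paper: reduce to \Cref{prop:extrartadj} applied to $\textup{\eqref{sq:generalsquare}} \tensor \Spt$, using that $\glowerstar \tensor \Spt$ and $\gbarlowerstar \tensor \Spt$ are left adjoints because they are exact functors between stable \categories that preserve filtered colimits. The paper packages the intermediate steps as \Cref{lem:Funlexclosed}, \Cref{cor:tensorpreservefilteredcolim}, and \Cref{cor:filteredcolimextrartadj}, but your inlined argument is the same.
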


\begin{remark}
	Since the \category $ \Spt $ is compactly generated, \Cref{cor:CGadjointability} shows that if the oriented square \eqref{sq:generalsquare} is left adjointable and the left adjoints $ \fupperstar $ and $ \fbarupperstar $ are left exact, then \enumref{cor:stabtensorfilteredcolim}{3} is satisfied.
	In particular, hypotheses \enumref{cor:stabtensorfilteredcolim}{1} and \enumref{cor:stabtensorfilteredcolim}{3} are satisfied for left adjointable squares of \topoi and geometric morphisms.
\end{remark}

To prove \Cref{cor:stabtensorfilteredcolim}, we begin with a few basic lemmas.
The key point is that the assumption that $ \glowerstar $ and $ \gbarlowerstar $ preserve filtered colimits implies that $ \glowerstar \tensor \Spt $ and $ \gbarlowerstar \tensor \Spt $ are left adjoints (\Cref{cor:filteredcolimextrartadj}).
Thus we are in the situation to apply \Cref{prop:extrartadj}.

The following is immediate from the definitions.

\begin{lemma}\label{lem:Funlexclosed}
	Let $ I $ be \acategory with finite limits and let $ S $ be \acategory with finite limits and filtered colimits.
	If filtered colimits in $ S $ are left exact, then $ \Funlex(I,S) \subset \Fun(I,S) $ is closed under filtered colimits.
\end{lemma}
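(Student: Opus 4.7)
The plan is to unpack both sides carefully: a functor $F \colon I \to S$ lies in $\Funlex(I,S)$ precisely when, for every finite $\infty$-category $K$ and every diagram $p \colon K \to I$ admitting a limit in $I$, the canonical comparison $F(\lim p) \to \lim (F \circ p)$ is an equivalence. Colimits in $\Fun(I,S)$ are computed pointwise, so given a small filtered $\infty$-category $J$ and a diagram $J \to \Funlex(I,S)$, $j \mapsto F_j$, I need to check that the pointwise colimit $F \colonequals \colim_{j \in J} F_j$ sends finite limits in $I$ to limits in $S$.

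The computation is a straightforward four-step chain of equivalences: for any finite diagram $p \colon K \to I$ with limit in $I$, one has
\begin{equation*}
	F(\lim p) \equivalent \colim_{j \in J} F_j(\lim p) \equivalent \colim_{j \in J} \lim (F_j \circ p) \equivalent \lim \colim_{j \in J} (F_j \circ p) \equivalent \lim (F \circ p) \period
\end{equation*}
The first and last equivalences use that colimits in $\Fun(I,S)$ and $\Fun(K,S)$ are computed pointwise. The second uses that each $F_j$ is left exact. The third is exactly the hypothesis that filtered colimits in $S$ are left exact, applied to the finite diagram shape $K$. That is the only nontrivial step, and it is supplied by hypothesis; there is no real obstacle.

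I would then briefly note that a fully rigorous version should justify that pointwise colimits of functors are again functors (standard) and that the comparison map being assembled is the canonical one — this is just naturality of the filtered-colimit-commutes-with-finite-limits equivalence in the diagram $p$. No additional input is required.
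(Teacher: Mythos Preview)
Your argument is correct and is exactly the unpacking the paper has in mind: the paper simply declares the lemma ``immediate from the definitions'' without writing out a proof, and your chain of equivalences is the standard justification of that claim.
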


\begin{corollary}\label{cor:tensorpreservefilteredcolim}
	Let $ \plowerstar \colon \fromto{S}{T} $ be a right adjoint between presentable \categories in which filtered colimits are left exact.
	Let $ E $ be a compactly generated \category.
	If $ \plowerstar $ preserves filtered colimits, then $ \plowerstar \tensor E $ preserves filtered colimits.
\end{corollary}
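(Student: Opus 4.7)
The plan is to use the explicit model for the tensor product $(-) \tensor E$ with a compactly generated category provided in \Cref{obs:CGtensorFunlex,obs:tensorlex} to reduce the claim to a pointwise statement about functor categories. Under the equivalence $S \tensor E \simeq \Funlex(\Ecptop, S)$ (and similarly for $T$), the functor $p_* \tensor E$ corresponds to post-composition with $p_*$. Once in this model, preservation of filtered colimits becomes transparent, because filtered colimits in the relevant functor categories are computed pointwise.

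More concretely, I would proceed as follows. First, since $E$ is compactly generated, $\Ecpt$ admits finite colimits, so $\Ecptop$ admits finite limits. By \Cref{obs:CGtensorFunlex}, restriction along $\incto{\Ecptop}{E^{\op}}$ yields equivalences $S \tensor E \simeq \Funlex(\Ecptop, S)$ and $T \tensor E \simeq \Funlex(\Ecptop, T)$. By \Cref{obs:tensorlex} (applied with roles reversed, or directly from the description in \cref{rec:HA.4.8.1.17} that tensoring a right adjoint gives post-composition), the functor $p_* \tensor E$ is identified with the post-composition functor
\begin{equation*}
	p_* \circ - \colon \Funlex(\Ecptop, S) \longrightarrow \Funlex(\Ecptop, T) \period
\end{equation*}

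Next, I would invoke \Cref{lem:Funlexclosed}: since filtered colimits in $S$ and in $T$ are left exact, the inclusions $\Funlex(\Ecptop, S) \subset \Fun(\Ecptop, S)$ and $\Funlex(\Ecptop, T) \subset \Fun(\Ecptop, T)$ are closed under filtered colimits. In particular, filtered colimits in $\Funlex(\Ecptop, S)$ and $\Funlex(\Ecptop, T)$ are computed pointwise. Since $p_*$ preserves filtered colimits in $S$ by hypothesis, post-composition with $p_*$ preserves pointwise filtered colimits, hence preserves filtered colimits in $\Funlex(\Ecptop, S)$. Transporting back across the equivalences, this shows that $p_* \tensor E$ preserves filtered colimits, completing the proof.

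There is no real obstacle here: the substantive content has already been packaged into \Cref{obs:CGtensorFunlex,obs:tensorlex,lem:Funlexclosed}, and the argument is simply the observation that ``pointwise'' colimits on both sides match and that post-composition respects them. The only minor care needed is to verify that $\Ecptop$ has the required finite limits (so that \Cref{lem:Funlexclosed} applies), which follows from the fact that $\Ecpt$ is closed under finite colimits in a compactly generated category.
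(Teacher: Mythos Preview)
Your proposal is correct and follows essentially the same approach as the paper: identify $ \plowerstar \tensor E $ with post-composition on $ \Funlex(\Ecptop,-) $, then use \Cref{lem:Funlexclosed} to see that filtered colimits in $ \Funlex(\Ecptop,-) $ are computed pointwise (equivalently, as in the ambient functor category $ \Fun(\Ecptop,-) $), where post-composition with a filtered-colimit-preserving functor clearly preserves them. The paper phrases this via an explicit commutative diagram embedding $ \Funlex(\Ecptop,-) $ into $ \Fun(\Ecptop,-) $, but the content is identical.
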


\begin{proof}
	Consider the commutative diagram of \categories
	\begin{equation*}
		\begin{tikzcd}[column sep=4.5em, row sep=2em]
			S \tensor E \arrow[d, "\plowerstar \tensor E"'] \arrow[r, "\sim"{yshift=-0.2em}] & \Funlex(E^{\op},S) \arrow[d, "\plowerstar \of -"'] \arrow[r, hooked] & \Fun(\Ecptop,S)  \arrow[d, "\plowerstar \of -"] \\
			T \tensor E \arrow[r, "\sim"{yshift=-0.2em}] & \Funlex(E^{\op},T) \arrow[r, hooked] & \Fun(\Ecptop,T) \period
		\end{tikzcd}
	\end{equation*}
	Since $ \plowerstar $ preserves filtered colimits, the rightmost vertical functor preserves filtered colimits. 
	The claim now follows from \Cref{lem:Funlexclosed}. 
\end{proof}

\begin{corollary}\label{cor:filteredcolimextrartadj}
	Let $ \plowerstar \colon \fromto{S}{T} $ be a right adjoint between presentable \categories in which filtered colimits are left exact.
	Let $ E $ be a stable presentable \category.
	If $ \plowerstar $ preserves filtered colimits, then the right adjoint functor $ \plowerstar \tensor E $ is also a left adjoint.
\end{corollary}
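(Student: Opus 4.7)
The plan is to reduce the problem to the case $ E = \Spt $ and then apply the adjoint functor theorem for presentable \categories. By \Cref{obs:stabadjoint}, since $ E $ is stable, the functor $ \plowerstar \tensor E $ admits a right adjoint whenever $ \plowerstar \tensor \Spt $ does; so it suffices to show that $ \plowerstar \tensor \Spt $ is a left adjoint. To this end, I will verify that it preserves all small colimits and invoke the presentable adjoint functor theorem.

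Colimit preservation breaks into two pieces. Because $ \Spt $ is compactly generated and filtered colimits are left exact in both $ S $ and $ T $ by hypothesis, \Cref{cor:tensorpreservefilteredcolim} applies and yields that $ \plowerstar \tensor \Spt $ preserves filtered colimits. Separately, the tensor products $ S \tensor \Spt $ and $ T \tensor \Spt $ are stable presentable \categories (tensoring with $ \Spt $ is stabilization), and $ \plowerstar \tensor \Spt $ is by construction a right adjoint; hence it preserves finite limits, and between stable \categories a functor preserving finite limits automatically preserves finite colimits. Since every small colimit can be expressed as a filtered colimit of finite colimits, $ \plowerstar \tensor \Spt $ preserves all small colimits, and the adjoint functor theorem furnishes the desired right adjoint.

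The argument is essentially a packaging of the earlier results in \cref{subsec:tensorcg,sec:filteredcolim}, so I do not anticipate any serious obstacle. The only slightly delicate point is to check that the right adjoint produced for $ \plowerstar \tensor \Spt $ really does transport to a right adjoint for $ \plowerstar \tensor E $; this is handled by \Cref{obs:stabadjoint} itself, via the identification $ \plowerstar \tensor \Spt \tensor E \equivalent \plowerstar \tensor E $ provided by the stability of $ E $.
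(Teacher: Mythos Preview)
Your proof is correct and follows essentially the same approach as the paper: reduce to $ E = \Spt $ via \Cref{obs:stabadjoint}, then use that $ \plowerstar \tensor \Spt $ is exact between stable \categories and preserves filtered colimits (\Cref{cor:tensorpreservefilteredcolim}) to invoke the Adjoint Functor Theorem. The paper compresses your second paragraph into the single observation that an exact functor between presentable stable \categories is a left adjoint as soon as it preserves filtered colimits, but the content is identical.
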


\begin{proof}
	By \Cref{obs:stabadjoint}, it suffices to show that $ \plowerstar \tensor \Spt $ is a left adjoint.
	Since $ S \tensor \Spt $ and $ T \tensor \Spt $ are stable and $ \plowerstar \tensor \Spt $ is exact, by the Adjoint Functor Theorem, it suffices to show that $ \plowerstar \tensor \Spt $ preserves filtered colimits.
 	\Cref{cor:tensorpreservefilteredcolim} completes the proof.
\end{proof}

\begin{proof}[Proof of \Cref{cor:stabtensorfilteredcolim}]
	\Cref{cor:filteredcolimextrartadj} shows that $ \glowerstar \tensor \Spt $ and $ \gbarlowerstar \tensor \Spt $ are left adjoints.
	Since $ E $ is stable, $ \textup{\eqref{sq:generalsquare}} \tensor \Spt \tensor E \equivalent \textup{\eqref{sq:generalsquare}} \tensor E $; applying \Cref{prop:extrartadj} to the oriented square $ \textup{\eqref{sq:generalsquare}} \tensor \Spt $ completes the proof.
\end{proof}


\subsection{Consequences of the Nonabelian Proper Basechange Theorem}\label{subsec:properbasechange}

We finish by explaining how \Cref{cor:CGadjointability,cor:stabtensorfilteredcolim} answer \Cref{quest:motivating}.

\begin{example}\label{ex:proper}
	Let
	\begin{equation}\label{sq:proper}
		\begin{tikzcd}
			\W \arrow[r, "\fbarlowerstar"] \arrow[d, "\gbarlowerstar"'] \arrow[dr, phantom, very near start, "\lrcorner", xshift=-0.25em, yshift=0.25em] & \Y \arrow[d, "\glowerstar"] \\ 
			\X \arrow[r, "\flowerstar"'] & \Z 
		\end{tikzcd}
	\end{equation}
	be a pullback square in the \category of \topoi and (right adjoints in) geometric morphisms.
	Assume that the geometric morphism $ \glowerstar $ is \textit{proper} in the sense of \HTT{Definition}{7.3.1.4}.
	Then the square \eqref{sq:proper} is left adjointable, the geometric morphism $ \gbarlowerstar $ is also proper, and the functors $ \glowerstar $ and $ \gbarlowerstar $ preserve filtered colimits \HTT{Remark}{7.3.1.5}.
	Applying \Cref{cor:CGadjointability,cor:stabtensorfilteredcolim}, we see that if $ E $ is a presentable \category which is stable or compactly generated, then the square $ \textup{\eqref{sq:proper}} \tensor E $ is left adjointable.
\end{example}

\begin{subexample}\label{subex:propertopology}
	Let
	\begin{equation*}
		\begin{tikzcd}
			W \arrow[r, "\fbar"] \arrow[d, "\gbar"'] \arrow[dr, phantom, very near start, "\lrcorner", xshift=-0.25em, yshift=0.25em] & Y \arrow[d, "g"] \\ 
			X \arrow[r, "f"'] & Z 
		\end{tikzcd}
	\end{equation*}
	be a pullback square of locally compact Hausdorff topological spaces, and assume that the map $ g $ is proper.
	By \HTT{Theorem}{7.3.1.16}, the geometric morphism $ \glowerstar \colon \fromto{\Sh(Y;\Spc)}{\Sh(Z;\Spc)} $ is proper.
	As a special case of \Cref{ex:proper}, we see that if $ E $ is a presentable \category which is stable or compactly generated, then the induced square of \categories of $ E $-valued sheaves 
	\begin{equation*}
		\begin{tikzcd}
			\Sh(W;E) \arrow[r, "\fbarlowerstar"] \arrow[d, "\gbarlowerstar"'] & \Sh(Y;E) \arrow[d, "\glowerstar"] \\ 
			\Sh(X;E) \arrow[r, "\flowerstar"'] & \Sh(Z;E) 
		\end{tikzcd}
	\end{equation*}
	is left adjointable.
\end{subexample}


\section{Adjointability and stabilization}\label{sec:adjointability_stabilization}

The following situation commonly arises in sheaf theory: we often only know that the exchange morphism
\begin{equation*}
	\Ex_{\sigma} \colon \fupperstar \glowerstar \to \gbarlowerstar\fbarupperstar
\end{equation*}
associated to an oriented square \eqref{sq:generalsquare} of some \categories of sheaves is an equivalence when restricted to a (not necessarily presentable) subcategory $ C' \subset C $.
Such is the case for the Proper Base\-change Theorem for étale cohomology: the relevant exchange morphism is an equivalence for torsion sheaves, but fails to be an equivalence in general \cite[Exposé XII, \S2]{MR50:7132}.
In these situations, the adjointability results proven in the previous sections do not immediately allow one to conclude that basechange results for a class of sheaves spaces imply basechange results with other coefficients.

The purpose of this section is to explain how use knowledge that the exchange morphism associated to an oriented square is an equivalence when restricted to a subcategory to deduce adjointability results after stabilization or tensoring with the \category of modules over an $ \Eup_1 $-ring.
See \Cref{prop:stable_adjointability,cor:R-module_adjointability}.
The results of this section generalize our work with Barwick and Glasman \cite[\S7.4]{exodromy}.

We begin with the case of stabilization. 
For convenience, we work in the more general setting of \categories with finite limits.
First we recall how stabilization works in this setting.

\begin{recollection}[(stabilization \HA{Definition}{1.4.2.8})]
	Write $ \Spcfin \subset \Spc $ for the \category of \defn{finite spaces}: the smallest full subcategory of $ \Spc $ containing the terminal object and closed under finite colimits.
	Let $ S $ be \acategory with finite limits.
	Recall that the \defn{stabilization} of $ S $ is the full subcategory
	\begin{equation*}
		\Stab(S) \subset \Fun(\Spcfinpt,S)
	\end{equation*}
	spanned by those functors that preserve the terminal object and carry pushout squares in $ \Spcfinpt $ to pullback squares in $ S $.
	Also recall that the functor $ \Omega_S^{\infty} \colon \fromto{\Stab(S)}{S} $ is defined by evaluation on the $ 0 $-sphere \smash{$ \Sph{0} \in \Spcfinpt $}.
	Stabilization defines a functor of $ (\infty,2) $-categories
	\begin{equation*}
		\Stab \colon \fromto{\Catlex}{\Catlex} \semicolon 
	\end{equation*}
	it is a subfunctor of the functor $ \Fun(\Spcfinpt,-) $.

	If $ S $ is a presentable \category, then the stabilization $ \Stab(S) $ has another description: there is a natural equivalence
	\begin{equation*}
		\Stab(S) \equivalent S \tensor \Spt
	\end{equation*}
	\HA{Example}{4.8.1.23}.
	Similarly to \enumref{obs:CGtensorFunlex}{1}, the tensor product $ (-) \tensor \Spt $ fits into a commutative square of functors of $ (\infty,2) $-categories
	\begin{equation*}
		\begin{tikzcd}[row sep=2em,column sep=4em]
			\PrR \arrow[d] \arrow[r, "{(-) \tensor \Spt}"] & \PrR \arrow[d] \\
			\Catlex \arrow[r, "\Stab"'] & \Catlex \period
		\end{tikzcd}
	\end{equation*}
	Here the vertical functors are inclusions of non-full subcategories.
\end{recollection}

Stabilization behaves well with respect to the functors $ \Omega^{\infty} $ and exchange morphisms:

\begin{observation}\label{nul:Omegainftycommute}
	Let $ p \colon \fromto{S}{T} $ be a left exact functor between \categories with finite limits.
	It is immediate from the definitions that the square
	\begin{equation*}
      \begin{tikzcd}[sep=2.25em]
	       \Stab(S) \arrow[d, "\Omega_S^{\infty}"'] \arrow[r, "\Stab(p)"] & \Stab(T) \arrow[d, "\Omega_T^{\infty}"] \\ 
	       S \arrow[r, "p"'] & T 
      \end{tikzcd}
    \end{equation*}
    canonically commutes.
\end{observation}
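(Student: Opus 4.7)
The plan is to unwind the definitions of the three functors involved and observe that the square commutes for a tautological reason: both composites equal the evaluation-at-$\Sph{0}$ of the post-composition with $p$.

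First I would recall that $\Stab(p) \colon \Stab(S) \to \Stab(T)$ is defined as the restriction of the post-composition functor
\[
p \of - \colon \Fun(\Spcfinpt, S) \longrightarrow \Fun(\Spcfinpt, T)
\]
to the full subcategories of stabilizations. This restriction is well-defined precisely because $p$ is left exact: post-composition with $p$ preserves the terminal object and sends squares that become pullbacks in $S$ to pullbacks in $T$. Similarly, $\Omega_S^{\infty}$ and $\Omega_T^{\infty}$ are the restrictions of the evaluation functor $\mathrm{ev}_{\Sph{0}} \colon \Fun(\Spcfinpt, -) \to (-)$ to the stabilization subcategories.

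The main step is then to observe that the larger square
\[
\begin{tikzcd}[sep=2.25em]
\Fun(\Spcfinpt, S) \arrow[d, "\mathrm{ev}_{\Sph{0}}"'] \arrow[r, "p \of -"] & \Fun(\Spcfinpt, T) \arrow[d, "\mathrm{ev}_{\Sph{0}}"] \\
S \arrow[r, "p"'] & T
\end{tikzcd}
\]
commutes canonically: for any $F \in \Fun(\Spcfinpt, S)$, both composites send $F$ to $p(F(\Sph{0}))$, and this identification is natural in $F$ because $\mathrm{ev}_{\Sph{0}}$ is a natural transformation of functors $\Cat_{\infty} \to \Cat_{\infty}$. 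Restricting along the inclusions $\Stab(S) \subset \Fun(\Spcfinpt, S)$ and $\Stab(T) \subset \Fun(\Spcfinpt, T)$ yields the desired commuting square.

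I do not expect any serious obstacle here; the content of the statement is essentially that the construction $\Stab$ is functorial in left exact functors and that $\Omega^{\infty}$ is natural in the \category variable. Both assertions are built into Lurie's definition of the stabilization functor as a subfunctor of $\Fun(\Spcfinpt,-) \colon \Catlex \to \Catlex$, so the proof amounts to citing this naturality.
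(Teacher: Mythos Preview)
Your proposal is correct and matches the paper's approach exactly: the paper simply asserts that the commutativity is ``immediate from the definitions,'' and you have spelled out precisely what that means by identifying $\Stab(p)$ as restricted post-composition and $\Omega^{\infty}$ as restricted evaluation at $\Sph{0}$. There is nothing more to say here.
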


\begin{observation}[(stabilization and naural transformations)]\label{nul:natonStab}
	Let $ p, p' \colon \fromto{S}{T} $ be left exact functors between \categories with finite limits, and let $ \sigma \colon \fromto{p}{p'} $ be a natural transformation.
	It is immediate from the definitions that the natural transformation $ \Stab(\sigma) $ is compatible with $ \sigma $ in the following sense: we have a natural identification $ \Omega_T^{\infty} \Stab(\sigma) = \sigma \Omega_S^{\infty} $ of natural tranformations
	\begin{equation*}
		\begin{tikzcd}[sep=2.5em]
			p \Omega_S^{\infty} = \Omega_T^{\infty} \Stab(p) \arrow[r] & \Omega_T^{\infty} \Stab(p') = p' \Omega_S^{\infty} \period
		\end{tikzcd}
	\end{equation*}
\end{observation}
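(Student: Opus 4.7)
The plan is to unwind the construction of $\Stab$ as a subfunctor of $\Fun(\Spcfinpt,-)$. The functor $\Fun(\Spcfinpt,-) \colon \Catlex \to \Cat_{\infty}$ acts on a left exact functor $p \colon S \to T$ by post-composition and on a natural transformation $\sigma \colon p \to p'$ by whiskering: for $F \colon \Spcfinpt \to S$, the component at $F$ of the induced natural transformation is $\sigma \ast F \colon p \circ F \to p' \circ F$. Since $\Stab$ is a subfunctor, this description transports verbatim to $\Stab(\sigma)$ at objects $F \in \Stab(S) \subset \Fun(\Spcfinpt,S)$.

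Next, I would unwind both sides at an object $F \in \Stab(S)$. On the one hand, because $\Omega_T^{\infty}$ is evaluation at $\Sph{0} \in \Spcfinpt$, we have
\begin{equation*}
    \Omega_T^{\infty}\bigl(\Stab(\sigma)_F\bigr) = (\sigma \ast F)_{\Sph{0}} = \sigma_{F(\Sph{0})} = \sigma_{\Omega_S^{\infty}(F)} \period
\end{equation*}
On the other hand, the component of $\sigma \Omega_S^{\infty}$ at $F$ is by definition $\sigma_{\Omega_S^{\infty}(F)}$. Thus the two natural transformations agree componentwise, and naturality in $F$ is automatic since both are obtained by evaluating the single natural transformation $\sigma \ast (-)$ of functors $\Fun(\Spcfinpt,S) \to \Fun(\Spcfinpt,T)$ at $\Sph{0}$.

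There is no genuine obstacle; the only real work is a bookkeeping one, namely checking that the informal description of $\Stab$ acting on $2$-morphisms by whiskering is consistent with the $(\infty,2)$-categorical statement that $\Stab$ is a subfunctor of $\Fun(\Spcfinpt,-)$. Once this is in place, both sides of the claimed identification are seen to be restrictions along $\Sph{0} \colon \pt \to \Spcfinpt$ of the same $2$-morphism in $\Cat_{\infty}$, so the identification is canonical, precisely as asserted by the phrase ``immediate from the definitions''.
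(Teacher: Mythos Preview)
Your proposal is correct and is exactly the unwinding that the paper leaves implicit with the phrase ``immediate from the definitions''; the paper provides no argument beyond that, and your componentwise verification via whiskering and evaluation at $\Sph{0}$ is precisely what that phrase encodes.
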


\begin{observation}[(stabilization and exchange morphisms)]\label{nul:BCcompatStab}
	Consider an oriented square  \eqref{sq:generalsquare} in \smash{$ \Catlex $} and assume that the left adjoints $ \fupperstar \colon \fromto{D}{B} $ and $ \fbarupperstar \colon \fromto{C}{A} $ are left exact.
	From \Cref{nul:natonStab} we see that there is a natural identification
	\begin{equation*}
		\Omega_B^{\infty} \Ex_{\Stab(\sigma)} = \Ex_{\sigma} \Omega_C^{\infty} 
	\end{equation*}
	of natural transformations
	\begin{equation*}
		\begin{tikzcd}[sep=2.5em]
			\fupperstar\glowerstar \Omega_C^{\infty} = \Omega_B^{\infty} \Stab(\fupperstar) \Stab(\glowerstar) \arrow[r] & \Omega_B^{\infty} \Stab(\gbarlowerstar) \Stab(\gbarupperstar) = \gbarlowerstar \gbarupperstar \Omega_C^{\infty} \period
		\end{tikzcd}
	\end{equation*}
\end{observation}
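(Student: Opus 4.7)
The plan is to reduce the identification to three applications of \Cref{nul:natonStab}, one for each of the constituent natural transformations of the exchange morphism. Recall from \Cref{def:basechangeconditions} that $\Ex_{\sigma}$ is the three-step composite built from the unit $\unit_{\fbar}$ of the adjunction $\fbarupperstar \dashv \fbarlowerstar$, the natural transformation $\sigma$, and the counit $\counit_f$ of the adjunction $\fupperstar \dashv \flowerstar$. Because $\Stab \colon \fromto{\Catlex}{\Catlex}$ is a functor of $(\infty,2)$-categories, it preserves adjunctions together with their units and counits; consequently $\Ex_{\Stab(\sigma)}$ is the strictly analogous three-step composite, formed from $\Stab(\unit_{\fbar})$, $\Stab(\sigma)$, and $\Stab(\counit_f)$ in place of $\unit_{\fbar}$, $\sigma$, and $\counit_f$.

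With this setup in hand, I would whisker the composite $\Ex_{\Stab(\sigma)}$ on the left by $\Omega_B^{\infty}$, break it into its three constituent pieces, and apply \Cref{nul:natonStab} separately to each of $\unit_{\fbar}$, $\sigma$, and $\counit_f$ in order to exchange an $\Omega^{\infty}$ across a $\Stab$. To handle the outer whiskerings by functors such as $\Stab(\fupperstar)$, $\Stab(\glowerstar)$, $\Stab(\fbarlowerstar)$, $\Stab(\fbarupperstar)$, and $\Stab(\gbarlowerstar)$, I would invoke \Cref{nul:Omegainftycommute}; all of these functors are left exact, being either left or right adjoints of left exact functors. Reassembling the three intermediate identifications then yields the claimed identity $\Omega_B^{\infty} \Ex_{\Stab(\sigma)} = \Ex_{\sigma} \Omega_C^{\infty}$.

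The argument is essentially formal once one commits to the setup of $(\infty,2)$-categorical adjunctions: there is no substantive obstacle, only the bookkeeping of whiskerings and the careful identification of each piece of the stabilized exchange morphism with $\Stab$ applied to the corresponding piece of the original exchange morphism. The one mildly delicate point is verifying that each auxiliary functor commuted past $\Omega^{\infty}$ really is left exact, but this is immediate from the hypothesis that $\fupperstar$ and $\fbarupperstar$ are left exact together with the fact that right adjoints between \categories with finite limits automatically preserve finite limits.
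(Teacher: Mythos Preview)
Your proposal is correct and is precisely the unpacking of what the paper leaves implicit: the paper's observation simply asserts that the identification follows from \Cref{nul:natonStab}, and your argument spells out how, by decomposing $\Ex_{\sigma}$ into its three constituent transformations and applying \Cref{nul:natonStab} (together with \Cref{nul:Omegainftycommute} for the whiskerings) to each. There is no meaningful difference in approach.
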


In order to state the main result, it is convenient to give a name to the largest subcategory on which the exchange morphism is an equivalence.

\begin{notation}
	Given an oriented square \eqref{sq:generalsquare}, we write $ C_{\Ex} \subset C $ for the full subcategory spanned by those objects $ X \in C $ such that the exchange morphism
	\begin{equation*}
		\Ex_{\sigma}(X) \colon \fupperstar \glowerstar(X) \to \gbarlowerstar\fbarupperstar(X)
	\end{equation*}
	is an equivalence.
\end{notation}

\begin{observation}
	In the setting of \cref{nul:BCcompatStab}, the subcategory $ C_{\Ex} \subset C $ is closed under finite limits.
	In this case, the stabilization $ \Stab(C_{\Ex}) $ of $ C_{\Ex} $ is the full subcategory spanned by those $ X \in \Stab(C) $ such that for each $ n \in \ZZ $, we have $ \Omega_C^{\infty-n}(X) \in C_{\Ex} $.
\end{observation}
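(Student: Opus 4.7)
The plan is to establish the two assertions in sequence. For the first, I would observe that under the hypotheses of \cref{nul:BCcompatStab}, both composites $\fupperstar\glowerstar$ and $\gbarlowerstar\fbarupperstar$ (viewed as functors $C \to B$) are left exact: $\fupperstar$ and $\fbarupperstar$ are left exact by hypothesis, while $\glowerstar$ and $\gbarlowerstar$ are right adjoints and hence automatically left exact. The exchange morphism $\Ex_{\sigma}$ is therefore a natural transformation between two finite-limit-preserving functors, and the full subcategory of $C$ on which such a transformation restricts to an equivalence is automatically closed under finite limits, since a finite limit of equivalences is an equivalence.

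For the second assertion, the first assertion implies that the inclusion $C_{\Ex} \hookrightarrow C$ is left exact; applying the $(\infty,2)$-functor $\Stab$ then produces a fully faithful inclusion $\Stab(C_{\Ex}) \hookrightarrow \Stab(C)$. Since $\Stab$ is a subfunctor of $\Fun(\Spcfinpt,-)$, its essential image consists precisely of those spectrum objects $X \colon \Spcfinpt \to C$ whose underlying functor factors through $C_{\Ex}$. The task therefore reduces to showing that $X$ factors through $C_{\Ex}$ if and only if $\Omega_C^{\infty-n}(X) \in C_{\Ex}$ for every $n \in \ZZ$.

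For the reverse implication, I would use that every finite pointed space is built from the spheres $S^n$ ($n \geq 0$) by iterated finite pushouts in $\Spcfinpt$. Since $X$ converts such pushouts into pullbacks in $C$, each value $X(K)$ is expressible as a finite limit assembled from the values $X(S^n) = \Omega_C^{\infty-n}(X)$ and the terminal object of $C$ (which lies in $C_{\Ex}$ tautologically); the first assertion then gives $X(K) \in C_{\Ex}$. The forward implication is tautological for $n \geq 0$; for $n < 0$, the identification $\Omega_C^{\infty-n}(X) \simeq \Omega_C^{|n|}(X(S^0))$ expresses $\Omega_C^{\infty-n}(X)$ as an iterated pullback in $C$ of an object already in $C_{\Ex}$, and the first assertion applies once more.

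The argument is essentially formal—this is why the result is labeled an observation—and no step strikes me as a genuine obstacle. The only pieces requiring any care are the bookkeeping for $\Omega^{\infty-n}$ at negative $n$ and the cellular decomposition of objects of $\Spcfinpt$ into spheres via finite pushouts; both are routine and reduce the entire statement to closure of $C_{\Ex}$ under finite limits, which was settled by the first assertion.
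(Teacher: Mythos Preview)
The paper states this as an observation without proof, and your argument supplies the expected details correctly. One minor correction: in the setting of \cref{nul:BCcompatStab} the square is assumed to lie in $\Catlex$, so $\glowerstar$ and $\gbarlowerstar$ are left exact \emph{by hypothesis} rather than because they are right adjoints (in this generality they need not be adjoints at all); the conclusion of your first paragraph is unaffected.
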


\begin{proposition}\label{prop:stable_adjointability}
	Assume that:
	\begin{enumerate}[label=\stlabel{prop:stable_adjointability}, ref=\arabic*]
		\item\label{prop:stable_adjointability.1} \eqref{sq:generalsquare} is an oriented square in $ \Catlex $.

		\item\label{prop:stable_adjointability.2} The left adjoints $ \fupperstar \colon \fromto{D}{B} $ and $ \fbarupperstar \colon \fromto{C}{A} $ are left exact.
	\end{enumerate}
	Then the exchange morphism associated to the oriented square of stable \categories
	\begin{equation*}\tag*{$ \Stab(\textup{\ref{sq:generalsquare}}) $}\label{sq:Stab_square}
		\begin{tikzcd}[column sep=2.75em, row sep=2em]
			\Stab(A) \arrow[r, "\Stab(\fbarlowerstar)" above] \arrow[d, "\Stab(\gbarlowerstar)" left] & \Stab(D) \arrow[d, "\Stab(\glowerstar)" right] \arrow[dl, phantom, "\Longleftarrow" sloped] \\ 
			\Stab(B) \arrow[r, "\Stab(\flowerstar)" below] & \Stab(C) \period
		\end{tikzcd}
	\end{equation*}
	is an equivalence when restricted to $ \Stab(C_{\Ex}) $.
\end{proposition}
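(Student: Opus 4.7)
The plan is to deduce the claim from the compatibility in Observation~\ref{nul:BCcompatStab} together with the observation that the family $\{\Omega_B^{\infty-n} : \Stab(B) \to B\}_{n \geq 0}$ is jointly conservative on the stable $\infty$-category $\Stab(B)$ (a spectrum object is determined by its values on spheres, and a map of spectrum objects is an equivalence iff all its deloopings are).

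First I would note that the exchange morphism $\Ex_{\Stab(\sigma)}$ is in fact defined on the stabilized square: since $\Stab$ is a functor of $(\infty,2)$-categories on $\Catlex$, it preserves adjunctions, so the adjunctions $\fupperstar \leftadjoint \flowerstar$ and $\fbarupperstar \leftadjoint \fbarlowerstar$ stabilize to adjunctions $\Stab(\fupperstar) \leftadjoint \Stab(\flowerstar)$ and $\Stab(\fbarupperstar) \leftadjoint \Stab(\fbarlowerstar)$. I would then fix $X \in \Stab(C_{\Ex})$ and reduce showing that $\Ex_{\Stab(\sigma)}(X)$ is an equivalence in $\Stab(B)$ to showing that $\Omega_B^{\infty-n}(\Ex_{\Stab(\sigma)}(X))$ is an equivalence in $B$ for every $n \geq 0$.

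The crux of the argument is the identification
\begin{equation*}
\Omega_B^{\infty-n}(\Ex_{\Stab(\sigma)}(X)) \equivalent \Omega_B^{\infty}(\Ex_{\Stab(\sigma)}(X[n])) \equivalent \Ex_{\sigma}(\Omega_C^{\infty}(X[n])) \equivalent \Ex_{\sigma}(\Omega_C^{\infty-n}(X)),
\end{equation*}
whose middle equivalence is Observation~\ref{nul:BCcompatStab} applied to $X[n]$. The first equivalence uses that the composite functors $\Stab(\fupperstar)\Stab(\glowerstar)$ and $\Stab(\gbarlowerstar)\Stab(\fbarupperstar)$ are exact between stable $\infty$-categories (left exact functors between stable $\infty$-categories are automatically exact), hence commute with the shift $[n]$, and combining this with naturality of $\Ex_{\Stab(\sigma)}$ yields $\Ex_{\Stab(\sigma)}(X)[n] \equivalent \Ex_{\Stab(\sigma)}(X[n])$. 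By the characterization of $\Stab(C_{\Ex})$ given just before the proposition, $\Omega_C^{\infty-n}(X) \in C_{\Ex}$, so the final morphism is an equivalence by definition of $C_{\Ex}$.

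The only mildly delicate bookkeeping is the compatibility of $\Ex_{\Stab(\sigma)}$ with shifts; this amounts to the fact that exact functors between stable $\infty$-categories commute with suspension and that the (co)units used to define $\Ex$ are compatible with this identification. I do not anticipate any serious technical obstacle beyond making this precise.
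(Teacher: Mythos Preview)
Your proposal is correct and follows essentially the same route as the paper: reduce to checking that $\Omega_B^{\infty-n}\Ex_{\Stab(\sigma)}(X)$ is an equivalence, identify this with $\Ex_{\sigma}(\Omega_C^{\infty-n}X)$ via Observation~\ref{nul:BCcompatStab}, and invoke the definition of $\Stab(C_{\Ex})$.

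The only cosmetic difference is your detour through shifts. The paper bypasses this: since $\Stab$ is a subfunctor of $\Fun(\Spcfinpt,-)$ and $\Omega^{\infty-n}$ is simply evaluation at $\Sph{n}$, the identity $\Omega_B^{\infty-n}\Ex_{\Stab(\sigma)} = \Ex_{\sigma}\,\Omega_C^{\infty-n}$ holds for all $n$ by exactly the same reasoning as Observation~\ref{nul:BCcompatStab}, with no appeal to exactness or commutation with suspension. This makes the ``mildly delicate bookkeeping'' you flag unnecessary, and also lets the paper phrase the argument uniformly over all $n\in\ZZ$ rather than $n\geq 0$.
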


\begin{proof}
	Let $ X \in \Stab(C_{\Ex}) $.
	To see that $ \Ex(X) $ is an equivalence, it suffices to show that for each integer $ n \in \ZZ $, the morphism
	\begin{equation*}
		\Omega_{B}^{\infty-n}\Ex(X) \colon \fromto{\Omega_{B}^{\infty-n} \fupperstar \glowerstar(X)}{\Omega_{B}^{\infty-n} \gbarlowerstar\fbarupperstar(X)} 
	\end{equation*}
	is an equivalence.
	Since all functors in question are left exact, applying \Cref{nul:BCcompatStab} we see that the morphism $ \Omega_{B}^{\infty-n}\Ex(X) $ is equivalent to the morphism
	\begin{equation*}
		\Ex(\Omega_{C}^{\infty-n}X) \colon \fromto{\fupperstar \glowerstar(\Omega_{C}^{\infty-n}X)}{\gbarlowerstar\fbarupperstar(\Omega_{C}^{\infty-n}X)} \period
	\end{equation*}
	The assumption that $ X \in \Stab(C_{\Ex}) $ guarantees that for all integers $ n \in \ZZ $, we have $ \Omega_{C}^{\infty-n} X \in C_{\Ex} $.
\end{proof}


\subsection{Adjointability and \texorpdfstring{$R$}{R}-modules}\label{subsec:adjointability_R-mods}

Now we explain how to bootstrap from \Cref{prop:stable_adjointability} to deduce analagous results when tensoring with the \category of modules over an $ \Eup_1 $-ring.

\begin{notation}\label{ntn:tensor_ModR}
	Let $ S $ be a presentable \category and $ R $ an $ \Eup_1 $-ring spectrum.
	\begin{enumerate}[label=\stlabel{ntn:tensor_ModR}]
		\item We write $ \Mod(R) $ for the \category of left $ R $-module spectra and $ \Uup \colon \fromto{\Mod(R)}{\Spt} $ for the forgetful functor.
		Note that $ \Uup $ is conservative as well as both a left and right adjoint.

		\item We write $ \Mod_R(S) \colonequals S \tensor \Mod(R) $ and $ \Uup_{S} $ for the conservative left and right adjoint functor $ S \tensor \Uup \colon \fromto{\Mod_R(S)}{\Stab(S)} $.
	\end{enumerate}
\end{notation}

\begin{nul}\label{nul:ModR_forget}	
	Let $ \plowerstar \colon \fromto{S}{T} $ be a right adjoint between presentable \categories.
	As a consequence of \Cref{obs:adjointcompat}, the induced functors after tensoring with $ R $-module spectra commute with the forgetful functors in the sense that we have canonical identifications
	\begin{equation*}
		\Uup_{T} \of \Mod_R(\plowerstar)  = \Stab(\plowerstar) \of \Uup_{S} \andeq \Uup_{T} \of \Mod_R(\pupperstar) = \Stab(\pupperstar) \of \Uup_{S} \period
	\end{equation*}
\end{nul}

\begin{corollary}\label{cor:R-module_adjointability}
	Let $ R $ be an $ \Eup_1 $-ring. 
	Assume that:
	\begin{enumerate}[label=\stlabel{cor:R-module_adjointability}, ref=\arabic*]
		\item\label{cor:R-module_adjointability.1} \eqref{sq:generalsquare} is an oriented square in $ \PrR $.

		\item\label{cor:R-module_adjointability.2} The left adjoints $ \fupperstar \colon \fromto{D}{B} $ and $ \fbarupperstar \colon \fromto{C}{A} $ are left exact.
	\end{enumerate}
	Then the exchange morphism associated to the oriented square of stable \categories
	\begin{equation*}\tag*{$ \Mod_R(\textup{\ref{sq:generalsquare}}) $}\label{sq:R-Mod_square}
		\begin{tikzcd}[column sep=5em, row sep=2.5em]
			\Mod_R(A) \arrow[r, "\Mod_R(\fbarlowerstar)" above] \arrow[d, "\Mod_R(\gbarlowerstar)" left] & \Mod_R(D) \arrow[d, "\Mod_R(\glowerstar)" right] \arrow[dl, phantom, "\Longleftarrow" sloped] \\ 
			\Mod_R(B) \arrow[r, "\Mod_R(\flowerstar)" below] & \Mod_R(C) \period
		\end{tikzcd}
	\end{equation*}
	is an equivalence when restricted to those objects $ X \in \Mod_R(C) $ such that $ \Uup_C(X) \in \Stab(C_{\Ex}) $.
\end{corollary}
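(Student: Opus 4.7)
The plan is to deduce the corollary from \Cref{prop:stable_adjointability} by exploiting the conservativity of the forgetful functor $\Uup_B \colon \Mod_R(B) \to \Stab(B)$, together with the compatibilities of \ref{nul:ModR_forget} between the forgetful functors and the functors on the two squares $\Mod_R(\textup{\ref{sq:generalsquare}})$ and $\Stab(\textup{\ref{sq:generalsquare}})$. Note first that \Cref{prop:stable_adjointability} applies: the hypotheses \enumref{cor:R-module_adjointability}{1} and \enumref{cor:R-module_adjointability}{2} imply the hypotheses \enumref{prop:stable_adjointability}{1} and \enumref{prop:stable_adjointability}{2} upon restricting from $\PrR$ to $\Catlex$.

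Fix $X \in \Mod_R(C)$ with $\Uup_C(X) \in \Stab(C_{\Ex})$, and write $\alpha(X) \colon \Mod_R(\fupperstar) \Mod_R(\glowerstar)(X) \to \Mod_R(\gbarlowerstar) \Mod_R(\fbarupperstar)(X)$ for the exchange morphism of $\Mod_R(\textup{\ref{sq:generalsquare}})$ at $X$. Since $\Uup_B$ is conservative, it suffices to show $\Uup_B(\alpha(X))$ is an equivalence. The key step is then to exhibit a canonical identification
\begin{equation*}
    \Uup_B(\alpha(X)) \;\simeq\; \Ex_{\Stab(\sigma)}(\Uup_C(X)) \period
\end{equation*}
Granting this, \Cref{prop:stable_adjointability} applied to $\Uup_C(X) \in \Stab(C_{\Ex})$ concludes the proof.

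To produce the identification, I would appeal to the fact that the forgetful functor $\Uup \colon \Mod(R) \to \Spt$ is both a left and a right adjoint, so tensoring with $\Uup$ defines a natural transformation of $(\infty,2)$-functors $(-) \tensor \Mod(R) \to (-) \tensor \Spt \simeq \Stab(-)$ which lives coherently in both $\PrR$ and $\PrL$. This two-sided functoriality ensures not only the object-level compatibilities of \ref{nul:ModR_forget} but also that the units and counits of the adjunctions $\Mod_R(\fupperstar) \dashv \Mod_R(\flowerstar)$ and $\Mod_R(\fbarupperstar) \dashv \Mod_R(\fbarlowerstar)$ intertwine, under $\Uup_{(-)}$, with those of $\Stab(\fupperstar) \dashv \Stab(\flowerstar)$ and $\Stab(\fbarupperstar) \dashv \Stab(\fbarlowerstar)$. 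Unwinding \Cref{def:basechangeconditions} and applying $\Uup_B$ term-by-term to the three-step composite defining $\alpha(X)$ then yields the desired identification.

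The main obstacle I anticipate is precisely this last $(\infty,2)$-categorical coherence: proving that $\Uup_{(-)}$ transports the unit $\unit_{\Mod_R(\fbar)}$ to $\unit_{\Stab(\fbar)}$ and the counit $\counit_{\Mod_R(f)}$ to $\counit_{\Stab(f)}$ is conceptually routine but notationally awkward. Once these coherences are recorded, the entire argument is formal.
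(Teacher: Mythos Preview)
Your proposal is correct and follows essentially the same approach as the paper: reduce to $\Stab(\textup{\ref{sq:generalsquare}})$ via conservativity of $\Uup_B$, identify $\Uup_B$ applied to the $\Mod_R$-exchange morphism with the $\Stab$-exchange morphism applied to $\Uup_C(X)$, and invoke \Cref{prop:stable_adjointability}. The paper handles the coherence step you flag more tersely, simply citing \ref{nul:ModR_forget} (which records the compatibility of $\Uup_{(-)}$ with both the left and right adjoints) and asserting the identification; your explicit discussion of why the units and counits intertwine is a reasonable elaboration of what the paper leaves implicit.
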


\begin{proof}
	Since the forgetful functor $ \Uup_{B} \colon \fromto{\Mod_R(B)}{\Stab(B)} $ is conservative, it suffices to show that for all $ X \in \Mod_R(C) $ such that $ \Uup_C(X) \in \Stab(C_{\Ex}) $, the morphism
	\begin{equation*}
		\Uup_{B}\Ex(X) \colon \fromto{\Uup_{B} \of \Mod_R(\fupperstar) \of \Mod_R(\glowerstar)(X)}{\Uup_{B} \of \Mod_R(\gbarlowerstar) \of \Mod_R(\fbarupperstar)(X)} 
	\end{equation*}
	is an equivalence.
	In light of \Cref{nul:ModR_forget}, we see that the morphism $ \Uup_{B}\Ex(X) $ is equivalent to the morphism
	\begin{equation*}
		\Ex(\Uup_{C}X) \colon \fromto{\Stab(\fupperstar) \of \Stab(\glowerstar)(\Uup_{C}X)}{\Stab(\fbarlowerstar) \of \Stab(\gbarupperstar)(\Uup_{C}X)} 
	\end{equation*}
	in $ \Stab(B) $.
	\Cref{prop:stable_adjointability} completes the proof.
\end{proof}

\begin{example}
	The Gabber--Illusie basechange theorem for oriented fiber product squares of coherent topoi \cite[Exposé XI, Théorème 2.4]{MR3309086} is an immediate consequence of \Cref{prop:stable_adjointability} combined with our nonabealian version \cite[Theorem 7.1.7]{exodromy}.
	See \cite[Proposition 7.4.11]{exodromy}.
\end{example}

\begin{example}[(Proper Basechange for étale cohomology)]
	As in the topological setting, \Cref{cor:R-module_adjointability} and Chough's Nonabelian Proper Basechange Theorem for torsion étale sheaves of spaces \cite[Theorem 1.2 \& Remark 1.6]{Chough:Proper} imply the classical result for torsion abelian sheaves \cite[Exposé XII, Théorème 5.1]{MR50:7132}.
	Moreover, this allows one to generalize the coefficients to any bounded-above $ \Eup_1 $-ring with finite homotopy groups.
\end{example}



\DeclareFieldFormat{labelnumberwidth}{#1}
\printbibliography[keyword=alph, heading=references]
\DeclareFieldFormat{labelnumberwidth}{{#1\adddot\midsentence}}
\printbibliography[heading=none, notkeyword=alph]

\end{document}